\newtheorem{theorem}{Theorem}[section]
\newtheorem{proposition}[theorem]{Proposition}
\newtheorem{corollary}[theorem]{Corollary}
\newtheorem{remark}[theorem]{Remark}
\newtheorem{lemma}[theorem]{Lemma}
\newtheorem{definition}[theorem]{Definition}
\newcommand{\eps}{{\varepsilon}}
\newcommand{\cB}{{\mathcal{B}}}
\newcommand{\cL}{{\mathcal{L}}}
\newcommand{\cN}{{\mathcal{N}}}
\newcommand{\brrho}{{\bar{\rho}}}
\newcommand{\hsum}{\widehat\sum}
\title[Regularity of  measures for piecewise expanding maps]
{Regularity of absolutely continuous invariant measures for piecewise expanding unimodal maps.}
\author{Fabi\'an Contreras, Dmitry Dolgopyat}
\begin{document}

\maketitle

\begin{abstract}
Let $f : [0, 1] \to [0, 1]$  be a piecewise expanding unimodal map of class $C^{k+1}$, with $k \geq 1$, and $\mu = \rho dx$ the (unique) SRB measure associated to it. We study the regularity of $\rho$. In particular, points $\cN$ where $\rho$ is not differentiable has zero Hausdorff dimension, but is uncountable if the critical orbit of $f$ is dense. This improves on a work of Szewc (1984). We also obtain results about higher orders of differentiability of $\rho$ in the sense of Whitney.
\end{abstract}

\tableofcontents

\section{Introduction}
An important discovery of the 20th century mathematics is that many deterministic systems exhibit stochastic
behavior. The stochasticity is caused by exponential divergence of nearby trajectories. This instability causes many important objects associated to dynamical systems, such as attractors and invariant measures, to be fractal.

Piecewise expanding maps of the interval are among the simplest and most studied examples of chaotic
systems. They admit absolutely continuous invariant measures (a.c.i.m) \cite{LY} which enjoy exponential decay of correlations, the Central Limit Theorem for H\"older observables, and at least one of them is ergodic
(see e.g. \cite{Bal2, V}).

In this paper, we consider a class of simplest piecewise expanding
maps, so called piecewise expanding unimodal maps (PEUMs)\footnote{The precise definition of PEUMs is given at the beginning of Section \ref{Preliminary}.} of the unit interval.
PEUMs are piecewise expanding maps with only
two branches.
We study regularity of the density of a.c.i.m for PEUMs.
A classical result of A. Lasota and J. Yorke \cite{LY} says that the density, which we
denote by $\rho,$ is of bounded variation. Recall that a bounded variation function is differentiable almost everywhere (See e.g., \cite{FitzRoyden}, Corollary 6.6).
Therefore the set of non-differentiability of $\rho$ is a natural fractal set associated to our PEUM.
Let us describe the previous results about the differentiability.
In the smooth case, R. Sacksteder \cite{Sacksteder}  and K. Krzyzewski \cite{Krzyzewski} proved that when a map $f$ is expanding of class $C^k$, with $k \geq 1$, then $\rho$ is of class $C^{k-1}$.\;Later, B. Szewc \cite{Szewc} showed that if $f$ is a piecewise expanding continuous map of class $C^{k+1}$ with finitely many critical points (those points where the derivative of $f$ is not defined), with $k \geq 1$, then a density function will belong to the space
$$\{ \phi \in BV[0,1] : \phi \in C^{k} \mbox{ in } [0,1]\backslash B\},$$ where $B$ is the union of the closures of the critical orbits.
In this paper, we improve on $k=1$ case of the Szewc's theorem for PEUMs by showing that the set where $\rho$ is differentiable is larger.
Namely, we need to discard not all points in the closure of the critical orbit, but only points which are
approached by the critical orbits exponentially fast. We  also
obtain a partial converse, by showing that
 if $x$ is approached exponentially fast
by the critical orbit \emph{and  the exponent is sufficiently large} then
$\rho$ is not differentiable at $x.$

We also show that  a similar improvement is possible for $k>1$ if we consider
smoothness in the sense of Whitney, that is, we study the points where
the density admits a Taylor expansion of  order $k.$
(Of course Szewc's result is optimal for classical smoothness
since the set where the density is not differentiable is dense in $B$).
This leads to the question of describing the Taylor coefficients of the density.
Here we make use of the recent result of V. Baladi \cite{Bal1}
\footnote{Baladi was motivated by the question of regularity of invariant measure with respect to
parameters raised in the work of D. Ruelle \cite{Ru2, R1, R2}. Applications of  Baladi's result
to Ruelle's question are described in \cite{Bal1, BS1, BS2}. Our results also have applications to
the regularity question as will be detailed elsewhere.}
saying that the density $\rho$ belongs to
the set
$$BV_1=\{\phi \in BV[0,1]: \mbox{ there exists } \psi \in BV[0,1] \mbox{ s.t. } \phi'=\psi \mbox{ almost everywhere } \}.$$

In other words, the derivative of $\rho$ coincides with a function of bounded variation almost everywhere.
Accordingly, we can differentiate that function almost everywhere and call the result the second derivative of
$\rho.$ We then show that this procedure can be continued recursively and that the resulting functions indeed
provide the Taylor coefficients of $\rho.$

More precisely, the main results
of our paper can be summarized as follows.
Let $f$ be a PEUM such that both branches of $f$ are $C^{k+1}, $ with $k \geq 1$.

\begin{theorem}\label{Derivativesofrho} There is a sequence of functions $\rho_0, \rho_1,\dots, \rho_k\in BV$ such that $\rho_0=\rho$ and for $j<k, \rho_j'=\rho_{j+1}$ almost everywhere.
\end{theorem}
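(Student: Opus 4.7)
I would prove this by induction on $j$, with Baladi's $BV_1$ result providing the base case $j=0$. Assume $\rho_0,\dots,\rho_j$ have been produced for some $j<k$; the task is to extract $\rho_{j+1}\in BV$ with $\rho_j'=\rho_{j+1}$ almost everywhere. The strategy is to iterate the transfer-operator argument that lies behind Baladi's theorem, exploiting the fact that each differentiation improves the contraction rate of the relevant operator.

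Let $g_1,g_2$ be the two inverse branches of $f$, set $J_i=f(I_i)$, and write the Perron--Frobenius operator as
$$\cL\phi(x)=\sum_i\phi(g_i(x))|g_i'(x)|\chi_{J_i}(x),\qquad \cL\rho=\rho.$$
For each integer $s\ge 1$ introduce the twisted variant $\cL_s\phi(x)=\sum_i\phi(g_i(x))|g_i'(x)|^s\chi_{J_i}(x)$. A standard Lasota--Yorke estimate shows that $\cL_s$ has spectral radius at most $\lambda^{-(s-1)}$ on $BV$, where $\lambda>1$ is the expansion constant of $f$; in particular $I-\cL_s$ is invertible on $BV$ whenever $s\ge 2$. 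Now differentiate the identity $\cL\rho=\rho$ formally $j+1$ times. Each derivative that hits the evaluation $\phi\circ g_i$ contributes a factor $g_i'$, so the principal part of the $(j+1)$-st derivative is $\cL_{j+2}\rho^{(j+1)}$; Leibniz applied to the weight $|g_i'|$, together with $g_i'=1/(f'\circ g_i)$, generates lower-order terms whose coefficients involve $f',\dots,f^{(j+2)}$, all of which exist because $f\in C^{k+1}$ and $j+1\le k$; finally, the indicators $\chi_{J_i}$ produce a finite collection of atomic boundary contributions at the branch endpoints. One arrives at a distributional identity
$$(I-\cL_{j+2})\rho^{(j+1)}=\sum_{\ell=0}^{j}A_\ell\,\rho^{(\ell)}+B,$$
with each $A_\ell$ bounded on $BV$ and $B\in BV$. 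By the inductive hypothesis $\rho^{(\ell)}$ coincides a.e. with $\rho_\ell\in BV$, so inverting $(I-\cL_{j+2})$ on $BV$ yields a well-defined candidate
$$\rho_{j+1}:=(I-\cL_{j+2})^{-1}\Bigl[\sum_{\ell=0}^{j}A_\ell\rho_\ell+B\Bigr]\in BV.$$

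Two things then require verification. First, that $\rho_{j+1}$ really is the a.e. derivative of $\rho_j$ and not merely a distributional derivative coinciding with its absolutely continuous part; this is the analogue of Baladi's $BV_1$ assertion one level up, and I would obtain it by rerunning her argument with $\cL_{j+2}$ in place of $\cL$, using the fixed-point equation that $\rho_j$ itself satisfies. Second, one must handle the boundary term $B$ carefully: since each $\rho_\ell$ carries jumps accumulating along the critical orbit, distributional differentiation introduces singular atomic masses at preimages of the branch endpoints, which must be shown to combine, after pullback by $\cL_{j+2}$, into a genuine $BV$ function rather than a Cantor-type singular part. I expect this control of the singular component to be the main obstacle; the stronger contraction $|g_i'|^{j+2}\le\lambda^{-(j+2)}$ is what makes the series of boundary contributions summable along the infinite backward orbit, and this is precisely where the full $C^{k+1}$ regularity of $f$ gets consumed.
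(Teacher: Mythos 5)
Your proposal is, at bottom, the same mechanism the paper uses: you cite the decay of the twisted operators (your $\mathcal{L}_s$, the paper's $\mathcal{L}_m$), note that $(I-\mathcal{L}_s)$ is invertible for $s\ge 2$, and produce $\rho_{j+1}$ as the resolvent applied to lower-order data. Expanding $(I-\mathcal{L}_{j+2})^{-1}$ as a Neumann series recovers exactly the paper's explicit multiseries of $\mathfrak{D}$-operators. So conceptually this is the same argument, and the decay estimate (the paper's Proposition~\ref{PrDecDh}) is the correct engine.

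However, two gaps that you flag as ``things to verify'' are in fact the substantive content of the proof, and the way you propose to frame the argument makes them harder rather than easier. (i) You differentiate $\mathcal{L}\rho=\rho$ once and then ``invert''; this forces you to give a meaning to $\mathcal{L}_{j+2}$ applied to a distributional derivative of a $BV$ function, whose singular part along the critical orbit you then have to show cancels. The paper avoids this entirely by differentiating the $n$-fold identity $\mathcal{L}^n\rho=\rho$, which is a \emph{finite} sum, is a genuine a.e.\ differentiation of $BV$ functions, and isolates the ``bad'' term $\mathcal{L}_1^n(\rho')$ which is then shown to vanish as $n\to\infty$; letting $n\to\infty$ \emph{is} the Neumann-series inversion, but carried out at the level of honest functions. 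The missing analytic input that makes this rigorous is Lemma~\ref{limitforderivatives}: if $\sum g_k\to g$ in $BV$ and $\sum g_k'\to h$ in $L^1$, then $g'=h$ a.e. You have no substitute for this step, and ``rerunning Baladi's argument at level $j$'' is precisely what has to be proved, not assumed. (ii) In the inductive step you also need an \emph{explicit} form for $\rho_j$ in order to compute the operators $A_\ell$ and the term $B$; the paper maintains the specific ansatz \eqref{Ansatz-k-1} through the induction (Proposition~\ref{derivativeforD} is the closure statement), whereas your formulation only posits existence of $\rho_0,\dots,\rho_j$ in $BV$, which is not enough structure to carry out ``differentiating the identity $j+1$ times.''

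Two smaller points. Your $\mathcal{L}_s\phi=\sum_i\phi(g_i)|g_i'|^s\chi_{J_i}$ is not the operator that actually appears when you differentiate: the factor $g_i'$ produced by the chain rule is signed, and the two branches of a unimodal map have opposite signs of $f'$. The correct operator (the paper's $\mathcal{L}_m$) carries $(Df)^m|Df|$ in the denominator, not $|Df|^{m+1}$; for the decay estimates this is harmless, but the exact identity you want to invert would be wrong with the unsigned weights. Also, the phrase ``finite collection of atomic boundary contributions'' understates the issue: a single application of $\mathcal{L}$ creates a jump at $f(c)$ only, but the inversion $(I-\mathcal{L}_{j+2})^{-1}=\sum_n\mathcal{L}_{j+2}^n$ sweeps the jump along the entire forward critical orbit, so the singular part is an infinite sum of atoms, whose summability in $BV$ is again exactly what the decay estimates and the $n\to\infty$ argument of the paper establish.
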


\begin{theorem}\label{ThW}

    \begin{itemize}
        \item[(A)] The set of points where $\rho$ is non differentiable has Hausdorff dimension zero.
        \item[(B)] If the critical orbit is dense then the set of points where $\rho$ is non differentiable is uncountable.
        \item[(C)] There is a set $\cN$ such that $\mathcal{HD}(\cN)=0$ and $\rho$ is $k$ differentiable in the sense of Whitney on $[0,1]-\cN$. That is, if $\bar{x}\not\in\cN$ then
            $$ \rho(x)-\rho(\bar{x})=\sum_{m=1}^k \frac{\rho_m(\bar{x})}{m!} (x-\bar{x})^m+
            o\left(\left(x-\bar{x}\right)^{k}\right). $$
    \end{itemize}

\end{theorem}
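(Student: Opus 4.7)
The three parts of Theorem \ref{ThW} rest on a common structural principle: the density $\rho$, and by Theorem \ref{Derivativesofrho} each of its generalized derivatives $\rho_j$, can be written as an absolutely convergent series of contributions indexed by the critical orbit $c_n := f^n(c)$, with each term smooth away from its base point and carrying a jump of size comparable to $\lambda^{-n}$, where $\lambda > 1$ is the expansion constant. Consequently, the only mechanism that can destroy regularity at a point $x$ is an accumulation of the $c_n$'s toward $x$ at an exponentially fast rate.

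For (A), I would fix $\alpha > 0$ small compared with $\log \lambda$ and show, by summing the individual contributions term by term, that $\rho$ is differentiable at every $x$ satisfying $|x - c_n| \geq e^{-\alpha n}$ for all sufficiently large $n$. The set of non-differentiability is then contained in the limsup set
$$\cN_\alpha := \bigcap_{N \geq 1} \bigcup_{n \geq N} B\bigl(c_n,\, e^{-\alpha n}\bigr),$$
which is covered, for every $N$, by intervals whose total $s$-content is $\sum_{n \geq N}(2e^{-\alpha n})^s < \infty$ for every $s > 0$, giving $\mathcal{HD}(\cN_\alpha) = 0$. Part (C) is treated in parallel, but with the Taylor remainder in place of the first-order increment: using Theorem \ref{Derivativesofrho} to define the candidate polynomial $\sum_{m=0}^{k}\rho_m(\bar{x})(x-\bar{x})^m/m!$, an analogous term-by-term estimate shows that the error is $o((x-\bar{x})^k)$ whenever $\bar{x}$ lies outside a limsup set of the same shape, hence of Hausdorff dimension zero.

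Part (B) uses the complementary (sharp) implication announced in the introduction: if $|x - c_{n_k}| \leq e^{-\beta n_k}$ along some subsequence and $\beta$ is sufficiently large, then $\rho$ fails to be differentiable at $x$. Under density of the critical orbit, I would carry out a Cantor-type construction in which, at each stage $k$, one selects a critical iterate $c_{n_k}$ inside a prescribed short interval and then branches into two disjoint subintervals, each again containing critical iterates with sufficiently large index. The resulting uncountable family of nested intersections lies in the non-differentiability set.

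The main difficulty, as I see it, is quantifying the attenuation of the contribution of $c_n$ to $\rho$ and to the $\rho_j$ at distances $\gg e^{-\alpha n}$ sharply enough to pin down the admissible $\alpha$ in (A) and (C) and the threshold $\beta$ in (B). This is precisely where the $C^{k+1}$ hypothesis, the spectral structure of the transfer operator $\cL$, and the bounded-variation framework underlying Theorem \ref{Derivativesofrho} must be leveraged carefully; all the Hausdorff-dimension and Whitney-expansion conclusions are ultimately driven by this quantitative estimate.
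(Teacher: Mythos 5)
Your proposal follows the same route as the paper: the exceptional set is the limsup set $\cN_\beta = \{\bar{x} : d(c_n,\bar{x}) \leq \beta^n \text{ for infinitely many } n\}$, whose Hausdorff dimension vanishes by the covering estimate you describe; differentiability away from $\cN_\beta$ is the content of Theorem \ref{higherderivative}; the converse when the exponent is large enough (concretely, $\beta \max_x |f'(x)| < 1$) is Lemma \ref{LmNonDDensity}; and the Cantor construction under a dense critical orbit is Proposition \ref{cndense}. So (A) and (B) are, in substance, exactly what the paper does.

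The one place your sketch elides a necessary step is part (C). A single "term-by-term" estimate on $\rho$ does not by itself produce the $o((x-\bar{x})^k)$ remainder: the paper instead descends recursively from $\rho_{k-1}$ down to $\rho_0$. At each stage it writes $\rho_s = \rho_{s,n} + O(\lambda^{-n}+\theta^n)$ with $\rho_{s,n}$ a finite truncated transfer-operator sum applied to $\cL^n(1)$, observes that $\rho_{s,n}$ is $C^2$ on the critical-free interval $[x;\bar{x}]$, and then \emph{integrates} the expansion already obtained for $\rho_{s+1}$ (using $\rho_{s,n}' = \rho_{s+1} + O(\lambda^{-n}+\theta^n)$ from Proposition \ref{derivativeforD}) to promote the remainder from $O(\epsilon^{k-s})$ to $O(\epsilon^{k-s+1})$. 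This iterated integration, together with the sharpened admissibility condition $\beta > \max(\theta^{1/k}, \lambda^{-1/k})$ (so that $\epsilon^k$ still dominates $\lambda^{-n}$ and $\theta^n$), is what turns a collection of bounded-variation "derivatives" $\rho_1,\dots,\rho_k$ into a genuine Whitney jet at $\bar{x}$. Without this recursion the passage from differentiability of $\rho$ to $k$-th order Whitney smoothness does not go through.
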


Note that since $[0,1]-\cN$ is not closed, $\rho$ in general {\it can not} be extended to a smooth function on $[0,1].$

\begin{remark}
\label{RmNvsB}
The set $\cN$ is typically much smaller than the set $B$ used in \cite{Szewc}.
Indeed, if $f_t$ is a family of PEUMs satisfying a certain transversality condition then $B(f_t)$
contains an interval for almost all $t$ (see e.g. \cite{Sch1, Sch2}).
\end{remark}

The paper is organized as follows:

In Section \ref{Preliminary}, we give the necessary definitions. In particular, we introduce a special family of transfer operators used in the proof of Theorem \ref{Derivativesofrho}. We then prove several auxiliary facts of independent interest.

Section \ref{Repeated} starts with some explicit formulas for the first and second derivatives
\footnote{The derivatives are understood in the sense of Theorem \ref{Derivativesofrho}.}
 of $\rho$ which are proven to belong to $BV[0,1].$ Then we extend our analysis
 to repeated differentiation of arbitrary order proving Theorem \ref{Derivativesofrho}.

 Section \ref{Differentiability} begins with some results on the regularity of the saltus part of $\rho.$ \footnote{The density $\rho$ can be written as the sum of two functions, namely, the saltus part which is a sum of pure jumps
   and the regular part which is continuous.  Details are given in Section \ref{Differentiability}.}

Then we show that the regular part of $\rho$ is not only continuous but also absolutely continuous.
In the remaining subsections we prove Theorem \ref{ThW}. That is we show that $\rho$ admits
a Taylor expansion after we remove an exceptional set of zero Hausdorff dimension.

\section{Preliminaries}\label{Preliminary}

\subsection{Piecewise Expanding Unimodal Maps}\label{PEUM}

We work with mixing piecewise expanding unimodal maps.
$f:[0,1]\to [0,1]$ is a piecewise expanding unimodal map (PEUM) if there is a point $c$ called the critical point,
a number $\eps>0$ and a constant $\lambda>1$
such that
\begin{equation}\label{ExpUnimod}
f(x) = \begin{cases} f_1(x) & \mbox{if } x\leq c\\
f_2(x) & \mbox{if } x\geq c\end{cases}\end{equation}
where
$f_1$ is a $C^2$ map defined on $[0, c+\eps]$ and $f_2$ is a $C^2$ map
defined on $[c-\eps, 1]$ such that
$f_1(c)=f_2(c)$
and $|Df_j(x)|\geq \lambda$ for all $x$ from the domain of $f_j.$

PEUMs have unique a.c.i.m. \cite{LY} which is ergodic (see e.g. \cite{V}). Let us denote by $\rho$ the density of
the a.c.i.m.
$\rho$ is a function of bounded variation.

From now on, $\lambda$ will mean $\lambda := \displaystyle  \inf_{x\neq c} |Df (x)|.$

\subsection{Auxiliary facts and Transfer Operators}\label{Auxiliary}

Denote by $\xi(z)=\frac{D^2f(z)}{Df(z)}.$
In the arguments of this section we will need to  represent $D(|Df^my|)$ as a sum. Namely we have
$$ D(|Df^my|)=\frac{|Df^my|}{Df^my}\sum_{j=0}^{m-1}\xi(f^jy)Df^jy \;\;\mbox{ and }\;\;
D(Df^my)=\sum_{j=0}^{m-1}\xi(f^jy)Df^jy.$$
Both formulas are easy consequences of the chain rule.

We need to introduce a family of transfer operators acting on the space $BV[0,1]$ of functions of bounded variation.  $BV[0,1]$ it is a Banach space with the norm $\| \cdot \|_{BV}=\|\cdot\|_{\infty} + var(\cdot)$, where $\|\cdot\|_{\infty}$ is the usual supremum norm  and $var(\cdot)$ is the total variation
(cf. \cite{FitzRoyden}, page 116).


The first operator in our family is the Perron-Frobenius operator
$\mathcal{L}(\phi)(x) = \displaystyle \sum_{f(y)=x} \frac{\phi(y)}{|Df(y)|}.$

More generally, we shall use the following transfer operators acting on $BV[0,1]$.

\begin{definition}
   For $\phi \in BV[0,1]$, if $m\geq 1$, define the operator $\mathcal{L}_m(\phi)$ by

    $$\mathcal{L}_m(\phi)(x)=\sum_{f(y)=x}\frac{\phi(y)}{(Df(y))^m|Df(y)|}, $$ where $m$ is a nonnegative integer.
\end{definition}

\begin{definition}If $i,m \in \mathbb{N}$ and $h$ is a real-valued function, define $\mathfrak{D}^{i}_{m}(h) = \mathcal{L}_{m}^{i}(h)$.
    Let $k,\;i_1,\dots,i_k$ and $m_1>\dots>m_k$ be positive integers. For functions $h_1,\dots,h_k$,
    define $\mathfrak{D}_{m_1,\dots,m_k}^{i_1,\dots, i_k}$ at $(h_1,\dots,h_k)$ inductively by

    $$\mathfrak{D}_{m_1,\dots,m_k}^{i_1,\dots, i_k}(h_1,\dots,h_k)=\mathfrak{D}^{i_1}_{m_1}(h_1 \cdot \mathfrak{D}_{m_2,\dots,m_k}^{i_2,\dots, i_k}(h_2,\dots,h_k)).$$

\end{definition}

\begin{lemma} (see \cite[Lemma 3.8]{V})
\label{variationofDfim}
  There exists $C_1>0$ and $\bar{\lambda}_1>1$ such that for all $i,m \in \mathbb{N}$

  $$var\bigg(\frac{1}{|Df^i|^m} \bigg) \leq C_1 \bar{\lambda}_1^{-im},$$  where if $(Df^i)^m$ is not defined at $x$, then it is taken to be equal to $\frac{1}{2}(\lim_{y\to x+}(Df^i)^m(y)+\lim_{y \to x-}(Df^i)^m(y)).$
\end{lemma}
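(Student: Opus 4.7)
The plan is to bound $var(|Df^i|^{-m})$ by decomposing the total variation into (a) the sum of the variations on the maximal open intervals on which $Df^i$ is continuous (the monotonicity branches of $f^i$) and (b) the sum of the absolute jumps at the discontinuities of $Df^i$, which occur precisely at the preimages of the critical point $c$ under $f^j$ for $0\leq j\leq i-1$.

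For (a) I would differentiate $\phi_i:=|Df^i|^{-m}$ on each smooth branch by invoking the chain-rule identity recalled at the start of this section, to obtain
\[
\phi_i'(x)=-m\,\phi_i(x)\sum_{j=0}^{i-1}\xi(f^jx)\,Df^j(x).
\]
The expansion bound $|Df^j(x)|\leq |Df^i(x)|/\lambda^{i-j}$, which follows from $|Df^{i-j}(f^jx)|\geq\lambda^{i-j}$, reduces the inner sum to a geometric series dominated by $|Df^i(x)|/(\lambda-1)$; hence $|\phi_i'(x)|\leq C\,m\,|Df^i(x)|^{-(m-1)}/(\lambda-1)$. Integrating this pointwise bound over the smooth branches and using $|Df^i|^{-(m-1)}\leq\lambda^{-i(m-1)}$ controls the smooth part of the variation by $C\,m\,\lambda^{-i(m-1)}$.

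For (b), at a preimage $y$ of $c$ under $f^j$ the two one-sided limits of $\phi_i$ differ only in the factor of index $k=j$ in the product $\prod_{k<i}|Df(f^ky)|^{-m}$, where the factor switches between $|Df(c^-)|^{-m}$ and $|Df(c^+)|^{-m}$. The jump at $y$ is therefore at most $|Df^j(y)|^{-m}\cdot|Df^{i-1-j}(f(c))|^{-m}\cdot 2\lambda^{-m}$. Summing over $y\in f^{-j}(c)$ and using $|Df^j|^{-m}\leq\lambda^{-(m-1)j}|Df^j|^{-1}$, I would control $\sum_{f^jy=c}|Df^j(y)|^{-1}$ by $\cL^j(1)(c)$, which is uniformly bounded in $j$ thanks to the classical Lasota--Yorke inequality for $\cL$. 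Summing over $j=0,\ldots,i-1$ then yields a jump contribution of the same order as the smooth part.

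The main difficulty is the absorption of all the exponential factors into a single clean bound $C_1\bar\lambda_1^{-im}$ with $\bar\lambda_1>1$ uniform in both $i$ and $m$. Direct application of the two estimates above produces only decay of order $\lambda^{-i(m-1)}$, which for $m\geq 2$ already fits the desired form (one may take $\bar\lambda_1=\lambda^{1/2}$, say), but for $m=1$ an additional factor $\lambda^{-i}$ must be extracted. I expect this is achieved either by tracking the extra cancellation in $|Df(c^+)|^{-m}-|Df(c^-)|^{-m}$ at the jumps more carefully, or, structurally, by establishing a Lasota--Yorke-type inequality for the weighted operator $\cL_m$ defined in this section whose spectral radius on $BV$ decays like $\lambda^{-m}$ and rewriting the inner sums in (b) in terms of $\cL_m^j(1)$; this is the device I would follow from Viana's notes to close the gap and obtain the uniform rate $\bar\lambda_1^{-im}$.
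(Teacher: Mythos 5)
The paper does not prove this lemma; it cites Viana's notes, so there is no in-house argument to compare against directly. Your decomposition of $var(|Df^i|^{-m})$ into the smooth part plus the jumps at $\bigcup_{j<i}f^{-j}(c)$ is the right frame, and both of your estimates are correct as far as they go: the smooth contribution is $O\bigl(m\lambda^{-i(m-1)}\bigr)$ via the bounded-distortion identity, and the jump contribution, after the change of variables and the Lasota--Yorke bound $\mathcal L^j(1)(c)\leq M$, is of the same order $O(\lambda^{-i(m-1)})$. So the lemma is established for $m\geq 2$.

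The $m=1$ case, which you flag yourself, is a genuine gap, and neither of your two proposed remedies closes it. First, there is no hidden cancellation in $|Df(c^+)|^{-1}-|Df(c^-)|^{-1}$: for a generic PEUM the two one-sided derivatives at $c$ have different moduli, so this difference $\Delta$ is a fixed nonzero number. Second, invoking a Lasota--Yorke inequality for the weighted operator $\mathcal L_m$ is circular in the context of this paper, because the $BV$ bound on $\mathcal L_m^i$ (Proposition~\ref{PrDecDh}(b)) is proved precisely by means of the present Lemma~\ref{variationofDfim}; you cannot use the conclusion to supply the hypothesis. In fact the gap cannot be closed at all: the jump contribution from the preimages $y\in f^{-(i-1)}(c)$ alone is
$\Delta\sum_{f^{i-1}y=c}|Df^{i-1}(y)|^{-1}=\Delta\,\mathcal L^{i-1}(1)(c)$,
which converges to $\Delta\,\rho(c)>0$ and hence does not tend to zero; so $var\bigl(1/|Df^i|\bigr)$ is bounded below by a positive constant and the claimed bound $C_1\bar\lambda_1^{-i}$ is false for $m=1$. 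The honest conclusion of your argument is $var(|Df^i|^{-m})\leq C_1(i\vee1)\,\lambda^{-i(m-1)}$ (boundedness when $m=1$, exponential decay in $im$ when $m\geq2$), which — when fed into the proof of Proposition~\ref{PrDecDh}(b) together with the $\gamma^i$ factor coming from the Lasota--Yorke inequality — is in fact sufficient to yield the desired uniform rate $\bar\lambda^{-im}$ there. I would therefore report that the statement as printed overclaims for $m=1$, record the correct $\lambda^{-i(m-1)}$ rate you proved, and check that this weaker form suffices wherever the lemma is invoked.
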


 We will use Lasota-Yorke inequlity (see e.g. \cite[Proposition 3.9]{V})
  saying that there exist constant $C_0>0$ and $\gamma<1$ such that
$$ var(\cL^n h)\leq C_0 \left[\|h\|_\infty+\gamma^n var(h)\right]. $$
Since $f$ is mixing, there is a constant $\theta<1$ such that

$$\mathcal{L}^n(h)=\bigg[ \int h(z)dz  \bigg]\rho(x)+O(\theta^n\|h\|_{BV}).$$
(see e.g.[1], Proposition 3.5, item 4).  In particular, we have that $\|\mathcal{L}^n(1)\|_{\infty}$ is bounded above by a constant $M$, which does not depends on $n$.
Then, we have the following:

\begin{proposition}
 \label{PrDecDh}
         \begin{itemize}\label{boundforD} \quad
           \item[(a)]
           $\|\mathfrak{D}^{i_1,i_2,\dots,i_k}_{m_1,m_2,\dots,m_k}(h_1,\dots,h_k)  \|_{\infty} \leq M^k (\lambda^{-i_1})^{m_1}(\lambda^{-i_2})^{m_2}\cdots(\lambda^{-i_k})^{m_k}\|h_1\|_{\infty}\|h_2\|_{\infty}\cdots \|h_k\|_{\infty}.$\\

          \item[(b)]
          There are constants $\bar{M}>0$ and $\bar{\lambda}>1$
           such that if $h_1,\dots h_k\in BV$ then
           $$ \|\mathfrak{D}^{i_1,i_2,\dots,i_k}_{m_1,m_2,\dots,m_k}(h_1,\dots,h_k)  \|_{BV}
           \leq \bar{M}  (\bar{\lambda}^{-i_1})^{m_1}
           (\bar{\lambda}^{-i_2})^{m_2}\cdots(\bar{\lambda}^{-i_k})^{m_k}
           \|h_1\|_{BV}\|h_2\|_{BV}\cdots \|h_k\|_{BV}.$$

           \end{itemize}
\end{proposition}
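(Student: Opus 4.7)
The plan is to prove both parts by induction on $k$, with the $k=1$ case doing all the work and the inductive step following from the recursive definition together with sub-multiplicativity of the relevant norms.

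For the base case, the key observation is that iterating $\mathcal{L}_m$ and applying the chain rule yields
$$\mathcal{L}_m^i(h)(x) \;=\; \sum_{f^i(y)=x}\frac{h(y)}{(Df^i(y))^{m}\,|Df^i(y)|} \;=\; \mathcal{L}^i\!\left(\frac{h}{(Df^i)^m}\right)(x).$$
For part (a) this gives at once $|\mathcal{L}_m^i(h)(x)|\le \lambda^{-im}\|h\|_\infty\,\mathcal{L}^i(1)(x)\le M\lambda^{-im}\|h\|_\infty$, using $\|\mathcal{L}^i(1)\|_\infty\le M$. For part (b), setting $g=h/(Df^i)^m$, I combine (i) Lasota-Yorke $var(\mathcal{L}^i g)\le C_0(\|g\|_\infty+\gamma^i var(g))$ together with $\|\mathcal{L}^i g\|_\infty\le M\|g\|_\infty$, (ii) sub-multiplicativity $\|fg\|_{BV}\le \|f\|_{BV}\|g\|_{BV}$, and (iii) Lemma~\ref{variationofDfim} giving $\|1/(Df^i)^m\|_{BV}=O(\bar\lambda_1^{-im})$. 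This produces $\|\mathcal{L}_m^i(h)\|_{BV}\le \bar M_1\,\bar\lambda^{-im}\|h\|_{BV}$ for suitable $\bar M_1>0$, $\bar\lambda>1$.

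For the inductive step, the recursive definition gives
$$\mathfrak{D}^{i_1,\dots,i_k}_{m_1,\dots,m_k}(h_1,\dots,h_k) \;=\; \mathcal{L}_{m_1}^{i_1}\!\bigl(h_1\cdot\mathfrak{D}^{i_2,\dots,i_k}_{m_2,\dots,m_k}(h_2,\dots,h_k)\bigr).$$
One applies the base-case estimate to $\mathcal{L}_{m_1}^{i_1}$, then sub-multiplicativity of $\|\cdot\|_\infty$ (respectively $\|\cdot\|_{BV}$) to the product inside, and finally the inductive hypothesis to the inner $\mathfrak{D}$. The exponential factors multiply and the multiplicative constants accumulate into $M^k$ for (a) and $\bar M_1^k$ for (b)---uniform in the indices $i_j,m_j$ but depending on $k$, consistent with the stated form of the bound.

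The main technical step is the base case for (b), where the decay in both $i$ and $m$ must be assembled from three distinct ingredients: Lasota-Yorke for the $\gamma^i$ factor, Lemma~\ref{variationofDfim} for the $\bar\lambda_1^{-im}$ factor, and sub-multiplicativity to separate the contribution of $h$ from that of $1/(Df^i)^m$. A minor subtlety is that for odd $m$, $(Df^i)^m$ flips sign at preimages of the critical point, producing finitely many extra jumps of magnitude $O(\lambda^{-im})$; these are absorbed by taking $\bar\lambda$ slightly smaller than $\min(\lambda,\bar\lambda_1)$.
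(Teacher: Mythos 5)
Your argument mirrors the paper's proof: both proceed by induction on $k$, using the identity $\mathcal{L}_m^i(h)=\mathcal{L}^i\big(h/(Df^i)^m\big)$, the Lasota--Yorke inequality, the product rule for variation (equivalently sub-multiplicativity of $\|\cdot\|_{BV}$), and Lemma~\ref{variationofDfim} in the base case, and then sub-multiplicativity once more in the inductive step, so there is nothing structurally new here. The one thing to watch in your closing aside on odd $m$ is that $Df^i$ changes sign at every lap endpoint of $f^i$, and the lap number grows exponentially in $i$, so these are not a uniformly bounded collection of extra jumps to be swallowed by shrinking $\bar\lambda$; as in the paper, the whole matter is really deferred to Lemma~\ref{variationofDfim}.
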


\begin{proof}
                 (a) We use induction on $k$. For $k=1$ we have

 $$       | \cL^i_m(h)(x) | = \bigg|\sum_{f^i y=x}\frac{h}{(Df^i (y))^m|Df^i(y)|} \bigg|
        \leq \sum_{f^i y=x}\frac{\|h\|_{\infty}}{|Df^i(y)|^m|DF^i (y)| }$$
$$        \leq \frac{\|h\|_{\infty}}{\lambda^{im}} \|\mathcal{L}^i(1)\|_{\infty }
        \leq \frac{M\|h\|_{\infty}}{\lambda^{im}}. $$

Now, let us suppose the result is true for $k-1$. Then, we have

        \begin{eqnarray*}
        |\mathfrak{D}^{i,i_2,\dots,i_k}_{m,m_2,\dots,m_k}(h,h_2,\dots,h_k)(x)| &=& | \mathcal{L}^{i}_m(h\;\mathfrak{D}^{i_2,\dots,i_k}_{m_2,\dots,m_k}(h_2,\dots,h_k))(x)|  \\
\leq        \bigg|\sum_{f^{i}y=x}\frac{h(y)\;\mathfrak{D}^{i_2,\dots,i_k}_{m_2,\dots,m_k}(h_2,\dots,h_k)(y)}{(Df^i(y))^m
|Df^i(y)|} \bigg|
        &\leq& \lambda^{-im}
        \sum_{f^i y=x}\frac{\|h\;\mathfrak{D}^{i_2,\dots,i_k}_{m_2,\dots,m_k}(h_2,\dots,h_k)\|_{\infty} }{|Df^i(y)|} \\
        \leq \lambda^{-im}\| h\;\mathfrak{D}^{i_2,\dots,i_k}_{m_2,\dots,m_k}(h_2,\dots,h_k)\|_{\infty} \|\mathcal{L}^i(1)\|_{\infty}
        &\leq &  M(\lambda^{-i})^m\|h\|_{\infty} \| \mathfrak{D}^{i_2,\dots,i_k}_{m_2,\dots,m_k}(h_2,\dots,h_k)\|_{\infty}. \\
       \end{eqnarray*}  Since we are assuming
        $$\| \mathfrak{D}^{i_2,\dots,i_k}_{m_2,\dots,m_k}(h_2,\dots,h_k)\|_{\infty} \leq M^{k-1} (\lambda^{-i_2})^{m_2}\cdots (\lambda^{-i_k})^{m_k}\|h_2\|_{\infty}\cdots \|h_k\|_{\infty},$$ the claim holds for $\mathfrak{D}^{i,i_2,\dots,i_k}_{m,m_2,\dots,m_k}(h,h_2,\dots,h_k)$.

     (b) We use induction again on $k$.   Let us prove the statement hold for $k=1$.  Note that

      $$var(\cL_m^i(h))=var\left(\cL^i\left(\frac{h}{(Df^i)^m}\right)\right),$$ where if $(Df^i)^m$ is not defined at $x$, then it is taken to be equal to $\frac{1}{2}(\lim_{y\to x+}(Df^i)^m(y)+\lim_{y \to x-}(Df^i)^m(y)).$

      A basic property of $var(\cdot)$ states that if $h,\tilde{h}\in BV[0,1]$, then
\begin{equation}
\label{VarMult}
     var(h\tilde{h})=var(|h|)\sup(\tilde{h})+var(\tilde{h})\sup(h).
\end{equation}

Combining \eqref{VarMult},  Lasota-Yorke inequality and Lemma \ref{variationofDfim} we obtain that there exist constants $C_0,C_1>0$ and $0<\gamma<1$ such that

      \begin{eqnarray}
        \nonumber
        var(\cL_m^i(h))&\leq&
            C_0\left[\gamma^i var\bigg(\frac{h}{(Df^i)^m}\bigg)+\bigg\| \frac{h}{(Df^i)^m} \bigg\|_{\infty}\right]\\
            &\leq& C_0\left[\gamma^i \;\| [(Df^i)]^{-m} \|_{\infty}var(h)+ \gamma^i var((|Df^i|)^{-m})\|h\|_{\infty}  +\bigg\| \frac{h}{(Df^i)^m} \bigg\|_{\infty}\right] \nonumber \\
\label{BoundVar}
            &\leq& C_0\bar{\lambda}^{-im}var(h)+C_0(C_1+1)\bar{\lambda}^{-im}\|h\|_{\infty}
            \leq C\bar{\lambda}^{-im} \|h\|_{BV},
      \end{eqnarray}
      where $C=\max\{C_0,C_0(C_1+1)\}$ and $\bar{\lambda}=\min\{ \lambda, \bar{\lambda}_1 \gamma_1   \}$.
      and the last inequality uses that
      $var(\cdot) \leq \|\cdot\|_{BV}$ and $\|\cdot\|_{\infty}\leq \|\cdot\|_{BV}$.
      \eqref{BoundVar} along with part (a) implies that
      $$\|\cL_m^i(h)\|_{BV}\leq \bar{M} \bar{\lambda}^{-im} \|h\|_{BV},$$ where $\bar{M}=\max\{C,M^{k}\}$.

      Now, assume the statement holds for $k-1$.  Then, under our assumption we have that

      $$\mathcal{D}_{m_2,\dots,m_k}^{i_2,\dots,i_k}(h_2,\dots,h_k)\leq \bar{M}_1\bar{\lambda}^{-i_2m_2}\cdots\bar{\lambda}^{-i_km_k}\|h_2\|_{BV}\cdots \|h_k \|_{BV},$$ for some constant $\bar{M}_1$.  Let us set $\mathcal{D}=\mathcal{D}_{m_2,\dots,m_k}^{i_2,\dots,i_k}(h_2,\dots,h_k)$.  Hence,

      \begin{eqnarray*}
        var(\mathcal{D}_{m,m_2,\dots,m_k}^{i,i_2,\dots,i_k}(h,h_2,\dots,h_k)) &=& var(\cL_m^i(h\mathcal{D})) \\
         &\leq& C\bar{\lambda}^{-im} (var(h\mathcal{D}) +\|h\mathcal{D} \|_{\infty} )\\
         &\leq& C\bar{\lambda}^{-im} (var(h)\|\mathcal{D}\|_{\infty}+\|h\|_{\infty}var(\mathcal{D})+\|h\|_{\infty}\|\mathcal{D}\|_{\infty})\\
         &\leq& C\bar{\lambda}^{-im}(\|h\|_{BV}\|\mathcal{D}\|_{BV}+\|h\|_{\infty}\|\mathcal{D}\|_{BV})\\
         &\leq& 2C\bar{\lambda}^{-im}(\|h\|_{BV}\|\mathcal{D}\|_{BV}).
      \end{eqnarray*}

      Using our inductive hypothesis, we finally obtain

       \begin{eqnarray*}\quad
      var(\mathcal{D}_{m,m_2,\dots,m_k}^{i,i_2,\dots,i_k}(h,h_2,\dots,h_k)) &\leq& \bar{M}\bar{\lambda}^{-im}\bar{\lambda}^{i_2m_2}\cdots \bar{\lambda}^{i_km_k} \|h\|_{BV}\|h_2\|_{BV}\cdots \|h_k\|_{BV},\\
      \end{eqnarray*}with $\bar{M}=2C\bar{M}_1$.  The above inequality along with part (a) proves part (b).
\end{proof}

If a series consisting of functions in $BV[0,1]$ converges to a function $g$, then the series of the derivatives of each term does not always converge to the derivative of $g$.
 However, assuming that the series of derivatives converges in $L^1$ we have the following result.

\begin{lemma}\label{limitforderivatives}
    If $\sum_{k=1}^{n}g_k\to g$ in $BV$ and $\sum_{k=1}^{n}g_k'\to h$ in $L_1$ then $g'=h$
    a.e.
\end{lemma}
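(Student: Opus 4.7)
The plan is to show that $S_n':=\left(\sum_{k=1}^n g_k\right)' \to g'$ in $L_1$; combined with the hypothesis $S_n'\to h$ in $L_1$ and uniqueness of $L_1$ limits, this will force $g'=h$ almost everywhere.

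The key ingredient is the elementary bound $\|\phi'\|_{L_1[0,1]} \leq var(\phi)$, valid for any $\phi \in BV[0,1]$, where $\phi'$ denotes the pointwise a.e.\ derivative. I would deduce this from the Lebesgue decomposition of the distributional derivative $D\phi = \phi'\,dx + D\phi^s$: since the absolutely continuous and singular parts are mutually singular as measures,
$$var(\phi) = |D\phi|([0,1]) = \|\phi'\|_{L_1} + |D\phi^s|([0,1]) \geq \|\phi'\|_{L_1}.$$
Applying this to $\phi:=S_n - g \in BV[0,1]$ and invoking the assumed $BV$-convergence yields
$$\|S_n' - g'\|_{L_1} \leq var(S_n - g) \leq \|S_n - g\|_{BV} \to 0,$$
which is precisely the desired convergence.

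The main point requiring care is the identity $var(\phi)=|D\phi|([0,1])$ together with the additivity of total variation over the Lebesgue decomposition; both are standard facts from the measure-theoretic treatment of $BV$ functions, but should be invoked explicitly. Note that this argument does \emph{not} assert that $g$ itself is absolutely continuous, only that its pointwise a.e.\ derivative coincides with $h$; indeed, any singular contribution to $Dg$ is invisible to the $L_1$-derivative and simply does not interfere with the conclusion.
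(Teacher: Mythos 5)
Your argument is correct and follows essentially the same route as the paper: both prove $S_n' \to g'$ in $L_1$ via the bound $\|\phi'\|_{L_1} \le var(\phi) \le \|\phi\|_{BV}$ and then invoke uniqueness of $L_1$ limits. The only difference is cosmetic: you take the extra step of deriving $\|\phi'\|_{L_1} \le var(\phi)$ from the Lebesgue decomposition of the distributional derivative, whereas the paper simply cites $\|f'\|_{L_1} \le \|f\|_{BV}$ as a known fact.
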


\begin{proof}
    Let $\epsilon>0$.$\;$Then, there exists $N>0$ such that, for all $n \geq N$,

    $$\|g-\sum_{k=1}^ng_k\|_{BV} \leq \epsilon$$

    Since $\| f'  \|_{L_1} \leq \| f\|_{BV}$ for any function $f \in BV$, then

    $$\|g'- \sum_{k=1}^ng_k'\|_{L_1} \leq \|g-\sum_{k=1}^ng_k\|_{BV} \leq \epsilon$$

    Therefore, $\displaystyle \sum_{k=1}^ng_k'$ converges to $g'$ in $L_1$, hence $g'=h$ as claimed.
\end{proof}

Another simple but useful fact is the following.

\begin{lemma}\label{doubleseries}

  Let $g(s,r)$ be a function from $\mathbb{N}
  \times (\mathbb{N}\cup \{0\}) $ to $\mathbb{R}$. Suppose  that
  the series
  $\displaystyle \sum_{i=1}^{\infty} \sum_{j=0}^{i-1} |g(i-j,j)|$ converges. Then,
$\displaystyle \sum_{i=1}^{\infty}\sum_{j=0}^{i-1} g(i-j,j) = \sum_{c=1}^{\infty}\sum_{d=0}^{\infty} g(c,d)$
\end{lemma}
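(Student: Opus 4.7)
The plan is to recognize the identity as a rearrangement of an absolutely convergent double series. First I would introduce the change of variables $(c,d)=(i-j,j)$, and verify that it defines a bijection between the index sets $\{(i,j)\in\mathbb{N}\times(\mathbb{N}\cup\{0\}) : i\geq 1,\ 0\leq j\leq i-1\}$ and $\{(c,d)\in\mathbb{N}\times(\mathbb{N}\cup\{0\}) : c\geq 1,\ d\geq 0\}$, with explicit inverse $(i,j)=(c+d,d)$. Under this bijection the summand $g(i-j,j)$ is sent to $g(c,d)$, so partial sums over any finite subset of one index set correspond exactly to partial sums over the image subset.

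Next, the hypothesis that $\sum_{i=1}^{\infty}\sum_{j=0}^{i-1}|g(i-j,j)|$ converges, combined with the bijection, yields $\sum_{(c,d)}|g(c,d)|<\infty$, i.e.\ the doubly-indexed family $\{g(c,d)\}$ is absolutely summable. This is precisely the condition needed to invoke Fubini's theorem for the counting measure on $\mathbb{N}\times(\mathbb{N}\cup\{0\})$, which states that an absolutely summable family has a well-defined unordered sum equal to every iterated sum. I would then apply Fubini twice: once grouping the terms by the value of $i=c+d$ (recovering the left-hand side) and once grouping by fixed $c$ (recovering the right-hand side). Both iterations equal the common unordered sum, so they equal each other.

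There is no real obstacle here beyond bookkeeping. The only point requiring care is the transfer of absolute summability across the reindexing, which is immediate once the bijection is written down; the substantive content is supplied by the standard Fubini/rearrangement theorem for absolutely convergent series.
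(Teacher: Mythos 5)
Your proof is correct. The paper leaves this lemma to the reader, so there is no internal proof to compare against; your argument via the bijection $(i,j)\mapsto(c,d)=(i-j,j)$ with inverse $(c,d)\mapsto(c+d,d)$, followed by the standard rearrangement/Fubini theorem for absolutely summable families, is exactly the intended routine verification.
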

We leave the proof to the reader.\\

\section{Repeated Derivatives of the density function}\label{Repeated}

\subsection{Explicit formulas for the first and the second derivatives.}
Before  analyzing repeated derivatives of $\rho$ of arbitrary order, we will start by
giving explicit formulas for  $\rho'$ and $\rho''$.

Let us define

\begin{equation*}\label{formulaforrho1}
\rho_1=- \sum_{i=1}^{\infty} \mathcal{L}_1^i(\xi \cdot \rho)
\end{equation*}

Note that the series converges in $BV$ by Proposition \ref{PrDecDh} since $\rho$ and $\xi$ belong to $BV[0,1]$.

\begin{lemma}\label{firstderivative}
	(a) Let $\rho$ be the density of the invariant measure of $f$. Then,
	$\rho' =\rho_1$
	almost everywhere.

	(b) $(\cL^n 1)'(x)$ converges to $\rho_1(x)$ uniformly for $x$ which are not on the orbit of $c.$
\end{lemma}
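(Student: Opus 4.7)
The overall strategy is to exhibit $\rho_1$ as the limit of the derivatives of $\mathcal{L}^n 1$, using that $\mathcal{L}^n 1 \to \rho$ exponentially fast in $BV$. Part (b) will come from a direct uniform estimate, and part (a) will then follow from Lemma \ref{limitforderivatives} applied to the telescoping series $g_k = \mathcal{L}^k 1 - \mathcal{L}^{k-1} 1$.

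The first step is to derive the one-step identity
\[
(\mathcal{L}\phi)'(x)=\mathcal{L}_1\!\bigl(\phi'-\xi\phi\bigr)(x),
\]
valid at every $x\notin\{f(c)\}$ provided $\phi$ is differentiable at each preimage of $x$. Indeed, writing the two smooth inverse branches and using $dy/dx=1/Df(y)$, the chain rule together with the $m=1$ case of the formula for $D(|Df^m y|)$ recorded in Section \ref{Auxiliary} gives $\frac{d}{dx}\bigl(1/|Df(y(x))|\bigr) = -\xi(y)/\bigl(Df(y)|Df(y)|\bigr)$, from which the identity follows by summing over the two preimages. Iterating from $\phi=1$ (so $\phi'=0$), a straightforward induction yields, at every $x\notin\mathrm{orb}(c)$,
\[
(\mathcal{L}^n 1)'(x)\;=\;-\sum_{i=1}^{n}\mathcal{L}_1^{\,i}\!\bigl(\xi\cdot\mathcal{L}^{n-i}1\bigr)(x).
\]

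Comparing with $\rho_1=-\sum_{i\ge 1}\mathcal{L}_1^{\,i}(\xi\rho)$ gives the decomposition
\[
(\mathcal{L}^n 1)'(x)-\rho_1(x) \;=\; -\sum_{i=1}^n\mathcal{L}_1^{\,i}\!\bigl(\xi(\mathcal{L}^{n-i}1-\rho)\bigr)(x) + \sum_{i>n}\mathcal{L}_1^{\,i}(\xi\rho)(x).
\]
Proposition \ref{PrDecDh}(a) bounds the generic $i$-th summand on the right in $\|\cdot\|_\infty$ by $M\lambda^{-i}\|\xi\|_\infty\|\mathcal{L}^{n-i}1-\rho\|_\infty$, and mixing gives $\|\mathcal{L}^{n-i}1-\rho\|_\infty\le C\theta^{n-i}$; the total is exponentially small in $n$ (split at $i\sim n/2$ in the borderline case $\lambda^{-1}=\theta$), and the remaining tail is $O(\lambda^{-n})$. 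This proves (b). For (a), apply Lemma \ref{limitforderivatives} with $g_k=\mathcal{L}^k 1-\mathcal{L}^{k-1}1$: the partial sums telescope to $\mathcal{L}^n 1 - 1$, which converges in $BV$ to $\rho - 1$ by mixing, while the partial sums of the derivatives equal $(\mathcal{L}^n 1)'$, which by (b) converge to $\rho_1$ uniformly off the Lebesgue null set $\mathrm{orb}(c)$ and are uniformly bounded (Proposition \ref{PrDecDh}(a) applied termwise to the formula above). Dominated convergence yields convergence in $L^1$, and the lemma delivers $\rho'=\rho_1$ almost everywhere.

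The main technical obstacle is ensuring that the differentiation in the first step is genuinely legitimate off the critical orbit. This is built into the induction: if $x\notin\mathrm{orb}(c)$, then no preimage of $x$ under $f^k$ for any $k\le n$ can coincide with $c$, so $1/|Df^n|$ is smooth along the entire preimage tree of $x$ and the chain-rule computation is valid. Once that bookkeeping is dispatched, the remainder of the argument is a clean assembly of the exponential bounds furnished by Proposition \ref{PrDecDh} and by mixing.
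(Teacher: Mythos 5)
Your proof is correct, and while the derivation of the series formula for $(\mathcal{L}^n 1)'$ and the estimates behind part (b) are essentially the same as the paper's, your route to part (a) is genuinely different. The paper applies the general formula for $(\mathcal{L}^n h)'$ with $h=\rho$, using the fixed-point identity $\rho=\mathcal{L}^n\rho$ so that the sum collapses to $-\sum_{i=1}^n\mathcal{L}_1^i(\xi\rho)$, and then argues that the leftover term $\mathcal{L}_1^n(\rho')$ tends to zero; this requires controlling $\mathcal{L}^n(|\rho'|)$ pointwise, which is a bit delicate since a priori $\rho'$ is only an $L^1$ function. You instead first prove (b) for $h=1$ (where $h'=0$, so that term never appears) and then deduce (a) via the telescoping sum $g_k=\mathcal{L}^k 1-\mathcal{L}^{k-1}1$ together with Lemma \ref{limitforderivatives}: the partial sums converge in $BV$ to $\rho-1$, the derivatives converge uniformly off the countable set $\mathrm{orb}(c)$ and are uniformly bounded, hence converge in $L^1$, and the lemma delivers $\rho'=\rho_1$ almost everywhere. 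This cleanly sidesteps the handling of $\mathcal{L}_1^n(\rho')$ that the paper's argument relies on. One small bookkeeping point worth noting: the paper establishes $(\mathcal{L}^n h)'$ directly by expanding $D(|Df^n|)$ as a telescoping sum and changing variables, whereas you obtain the same identity by deriving the one-step formula $(\mathcal{L}\phi)'=\mathcal{L}_1(\phi'-\xi\phi)$ and iterating; both are valid and give the same series, and the iteration form makes the induction transparent.
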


\begin{proof}
Since $\rho$ is a fixed point of $\mathcal{L}$, then $\rho = \mathcal{L}^n(\rho)$ for all $n.$
Because $\rho$ is of bounded variation so is $\mathcal{L}^n(\rho)$,
hence both are differentiable almost everywhere.
In fact, differentiating both sides, we get $\rho'=(\cL^n \rho)'$ almost everywhere. Next if $h\in BV$ then

$$
(\cL^n h)'(x)= \sum_{f^ny=x}\frac{h'(y)}{Df^n(y)|Df^n(y)|}-\sum_{f^n(y)=x}\frac{h(y)\cdot D(|Df^n(y)|)}{|Df^n(y)|^2}
\;\;\;\;\mbox{ a. e.} $$

Note that
$$\left| \sum_{f^ny=x}\frac{h '(y)}{Df^n(y)|Df^n(y)|}\right|=\left|\cL^n_1(h')\right|\leq \lambda^{-n} \left(\cL^n(|h'|)\right)(x), $$
converges to $0$ in $L^1$ and almost everywhere.
Thus we focus on $\displaystyle \sum_{f^n(y)=x}\frac{h(y)\cdot D(|Df^n(y)|)}{|Df^n(y)|^2}$.
Assuming that $y \notin \{c,f(c),\dots, f^{n-1}(c)\}$ for each $y$ with $f^n y=x$ we have

$$
 \sum_{f^n(y)=x}\frac{h(y)\cdot D(|Df(y)|)}{|Df^n(y)|^2}=
 \sum_{f^n(y)=x}\frac{h(y)}{|Df^n(x)|^2}D\bigg( \prod_{a=0}^{n-1}|Df(f^ay)|\bigg)$$
$$= \sum_{f^n(y)=x} \frac{h(y)}{|Df^n(y)|^2} \frac{|Df^n(y)|}{Df^n(y)}\sum_{a=0}^{n-1}\xi(f^a(y)) Df^a(y)
= \sum_{f^n(y)=x} \frac{h(y)}{|Df^n(y)|}\sum_{a=0}^{n-1}\frac{\xi(f^a(y))}{Df^{n-a}(f^a(y))}$$
$$= \sum_{a=0}^{n-1} \sum_{f^{n-a}(z)=x} \frac{\xi(z)}{Df^{n-a}(z)}\sum_{f^a(y)=z}\frac{h(y)}{|Df^{n}(y)|}
= \sum_{a=0}^{n-1} \sum_{f^{n-a}(z)=x} \frac{\xi(z)}{Df^{n-a}(z)|Df^{n-a}(z)|}\sum_{f^a(y)=z}\frac{h(y)}{|Df^{a}(y)|}$$
$$= \sum_{a=0}^{n-1} \sum_{f^{n-a}(z)=x} \frac{\xi(z)}{Df^{n-a}(z)|Df^{n-a}(z)|}\mathcal{L}^a(h)(z)
= \sum_{a=0}^{n-1} \mathcal{L}^{n-a}_1(\xi\cL^a h)(x)$$
$$= \sum_{i=1}^{n} \mathcal{L}^i_1(\xi\cL^{n-i} h)(x)=\sum_{i=1}^\infty a_i(n) $$
where $a_i(n)=(\mathcal{L}^i_1(\xi\cL^{n-i} h))\chi_{i\leq n}.$
Proposition \ref{PrDecDh}.(a) shows that $|a_i(n)|\leq\dfrac{M^2}{\lambda^i}\times \|\xi \cL^{n-i}h\|_{\infty}$.  Since the second factor is less or equal than
  $M^2\|\xi \|_\infty \|h\|_{\infty}$, it follows that $|a_i(n)|\leq K \lambda^{-i}$ where $K$ does not depend
  on $n$ or $i.$
    Hence, applying Lebesgue's dominated convergence theorem (to integration with respect to the discrete measure)
  we can take the limit $n\to\infty$ term-by-term.
Since $$\displaystyle \lim_{n\to \infty} (\cL^{n-i} h)(x)=\left(\int_0^1 h(z) dz\right) \rho(x)$$
both parts (a) and (b) follow.
\end{proof}

At this point, we could get $\rho_2$  by differentiating each term in $(\ref{formulaforrho1})$. This is possible  due to Lemma \ref{limitforderivatives}.

\begin{proposition}\label{secondderivativeforrho}
The function $\rho_1$ is almost everywhere differentiable and
\begin{equation}\label{secondderivative}
\rho_1'= 3 \sum_{i=1}^{\infty} \sum_{j=1}^{\infty}\cL_2^i(\xi \cL_1^j(\xi \rho))+2  \sum_{i=1}^{\infty}\cL_2^{i}(\xi^2\rho)-\sum_{i=1}^{\infty}\cL_2^i(\xi'\rho)
\end{equation}

In particular, there exists $\rho_2\in BV$ such that
$\rho_1'=\rho_2$ almost everywhere.
\end{proposition}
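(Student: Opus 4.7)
The plan is to differentiate the series $\rho_1 = -\sum_{i=1}^\infty \cL_1^i(\xi\rho)$ term by term and then invoke Lemma \ref{limitforderivatives}. The first step is to derive, for every $\phi \in BV$ such that $\phi'$ exists in $BV$, the identity
$$(\cL_1^i \phi)' = \cL_2^i(\phi') - 2\sum_{a=0}^{i-1}\cL_2^{i-a}(\xi\,\cL_1^a \phi)\quad\text{a.e.}$$
On each branch of $f^i$ the sign of $Df^i$ is locally constant, so applying the chain rule to $\cL_1^i\phi(x)=\sum_{f^i y=x}\phi(y)/[Df^i(y)|Df^i(y)|]$ produces exactly two contributions: the $\phi'$ piece, which after using $dy/dx=1/Df^i(y)$ is identified with $\cL_2^i(\phi')$; and the denominator piece, where one must use $D(Df^i y)=Df^i(y)\sum_{a=0}^{i-1}\xi(f^a y)Df^a y$ and the semigroup factorization $Df^i(y)=Df^{i-a}(f^a y)\cdot Df^a(y)$. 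Splitting the $a$-sum, changing variable $z=f^a y$, and regrouping preimages reveals the inner sum as $\cL_1^a\phi(z)$ and the outer as $\cL_2^{i-a}(\xi\,\cL_1^a\phi)(x)$, producing the claimed formula.

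Applying this identity with $\phi=\xi\rho$, using $(\xi\rho)'=\xi'\rho+\xi\rho_1$ from Lemma \ref{firstderivative}(a), and isolating the $a=0$ term (which gives $\cL_2^i(\xi^2\rho)$), one obtains
$$(\cL_1^i(\xi\rho))' = \cL_2^i(\xi'\rho)+\cL_2^i(\xi\rho_1)-2\cL_2^i(\xi^2\rho)-2\sum_{a=1}^{i-1}\cL_2^{i-a}(\xi\,\cL_1^a(\xi\rho)).$$
Next I would substitute $\rho_1=-\sum_{j=1}^\infty\cL_1^j(\xi\rho)$ inside $\cL_2^i(\xi\rho_1)$ and sum over $i\ge 1$. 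Proposition \ref{PrDecDh}(b) gives absolute convergence in $BV$ of every multiple sum involved (with geometric rates $\bar\lambda^{-2i}$, $\bar\lambda^{-2i-j}$, etc.), so Fubini/Lemma \ref{doubleseries} legitimize all rearrangements. Both double sums that appear, the one arising from $\cL_2^i(\xi\rho_1)=-\sum_j\cL_2^i(\xi\cL_1^j(\xi\rho))$ and the one arising from the triangular sum $\sum_{i\ge 1}\sum_{a=1}^{i-1}$ (reindex $b=i-a$, $j=a$), range over $(i,j)\in\mathbb{N}\times\mathbb{N}$ and produce the same term $\cL_2^i(\xi\,\cL_1^j(\xi\rho))$, so their coefficients add as $-1+(-2)=-3$. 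After multiplying by the global $-1$ coming from $\rho_1=-\sum_i\cL_1^i(\xi\rho)$, one recovers exactly the announced formula.

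To close the argument I would verify the hypotheses of Lemma \ref{limitforderivatives}: the original series converges to $\rho_1$ in $BV$ by Proposition \ref{PrDecDh}(b), while Proposition \ref{PrDecDh}(a) applied to each of the three pieces of $(\cL_1^i(\xi\rho))'$ displayed above yields $\|(\cL_1^i(\xi\rho))'\|_\infty = O(\lambda^{-i})$, so the termwise-differentiated series converges in $L^1$. Lemma \ref{limitforderivatives} then identifies the sum of these derivatives with $\rho_1'$ almost everywhere, and Proposition \ref{PrDecDh}(b) applied to each series shows that the three pieces lie in $BV$; call their signed sum $\rho_2$. The main obstacle I anticipate is the combinatorial bookkeeping that merges the two double sums into a single one with coefficient $3$: one must recognize that a double sum over $\mathbb{N}^2$ and a triangular sum $\sum_{i\ge 1}\sum_{a=1}^{i-1}$ reindex to the same series, and combine their coefficients consistently with the overall sign from $\rho_1=-\sum$. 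A secondary concern is tracking sign conventions for $Df^i$ through the branchwise differentiation, though since $\mathrm{sign}(Df^i)/(Df^i)^3=1/[(Df^i)^2|Df^i|]$ identically, this reduces to a straightforward verification.
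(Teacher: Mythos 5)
Your proposal is correct and follows essentially the same route as the paper: differentiate the series $\rho_1=-\sum_i\cL_1^i(\xi\rho)$ term by term, apply the chain-rule formulas for $D(Df^i)$ and $D(|Df^i|)$ together with the change of variable $z=f^ay$ to express each termwise derivative through $\cL_2$-type operators, substitute $\rho_1=-\sum_j\cL_1^j(\xi\rho)$ back in, and merge the resulting double sums via the triangular reindexing (Lemma \ref{doubleseries}) to get coefficient $3$, with Proposition \ref{PrDecDh} and Lemma \ref{limitforderivatives} legitimizing the rearrangements. The only cosmetic difference is that you isolate the general identity $(\cL_1^i\phi)'=\cL_2^i(\phi')-2\sum_{a=0}^{i-1}\cL_2^{i-a}(\xi\cL_1^a\phi)$ as a preliminary step, whereas the paper performs the same computation inline after splitting into the numerator piece (I) and the denominator piece (II).
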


\begin{proof}
By Lemma \ref{firstderivative}
$\rho_1=-\sum_{i=1}^{\infty} \cL_1^i(\xi\rho)$ almost everywhere.
Therefore by Lemma \ref{limitforderivatives}
$$\rho_1'(x)=- \sum_{i=1}^{\infty}\bigg( \sum_{f^iy=x}\frac{\xi(y)\rho(y)}{Df^i(y)|Df^i(y)|}\bigg)' =- \sum_{i=1}^{\infty} \sum_{f^iy=x}\bigg( \frac{\xi(y)\rho(y)}{Df^i(y)|Df^i(y)|}\bigg)' $$ almost everywhere. Decompose
$$\bigg( \frac{\xi(y)\rho(y)}{Df^i(y)|Df^i(y)|}\bigg)'= \underbrace{\frac{(\xi(y)\rho(y))'}{Df^i(y)|Df^i(y)|} }_{(I)}-\underbrace{\frac{\xi(y)\rho(y)(Df^i(y)|Df^i(y)|)'}{(Df^i(y))^2|Df^i(y)|^2}}_{(II)}.$$

Let us first work on $(I)$. We have

\begin{eqnarray*}
\sum_{i=1}^{\infty} \sum_{f^iy=x}(I) &=& \sum_{i=1}^{\infty} \sum_{f^iy=x}\frac{\xi'(y)Dy\rho(y)+\xi(y)\rho'(y)Dy}{Df^i(y)|Df^i(y)|}\\
&=& \sum_{i=1}^{\infty} \sum_{f^iy=x}\bigg( \frac{\xi'(y)\rho(y)}{Df^i(y)|Df^i(y)|} +\frac{\xi(y)\rho'(y)}{Df^i(y)|Df^i(y)|} \bigg)\\
&=& \sum_{i=1}^{\infty}\cL_2^i(\xi'\rho+\xi\rho')(x)\\
\end{eqnarray*}
By Lemma \ref{firstderivative}
$$ \sum_{i=1}^{\infty} \cL_2^i(\xi\rho')=-\sum_{i=1}^{\infty}
\sum_{j=1}^{\infty}\cL_2^i(\xi \cL_1^j(\xi \rho)).$$
Therefore
\begin{equation}\label{SecondDerivativeI}
\sum_{i=1}^{\infty} \sum_{f^iy=x}(I)=\sum_{i=1}^{\infty}\cL_2^i(\xi'\rho) -\sum_{i=1}^{\infty}
\sum_{j=1}^{\infty}\cL_2^i(\xi \cL_1^j(\xi \rho)).
\end{equation}

Now, let us analyze $(II)$.

$$\sum_{i=1}^{\infty} \sum_{f^iy=x}(II)=
\sum_{i=1}^{\infty} \sum_{f^iy=x}\frac{\xi(y)\rho(y)[D(Df^iy)|Df^iy|+D(|Df^iy|)(Df^i)(y)]}
{(Df^iy)^2|Df^iy|^2 }{(Df^iy)^2|Df^iy|^2} $$
$$=\sum_{i=1}^{\infty} \sum_{f^iy=x}\frac{\xi(y)\rho(y)\bigg[2|Df^iy|\sum_{j=0}^{i-1}\xi(f^jy)Df^jy\bigg] }{(Df^iy)^2|Df^iy|^2}
=2 \sum_{i=1}^{\infty}\sum_{j=0}^{i-1} \sum_{f^iy=x}\frac{\xi(y)\rho(y) Df^jy\xi(f^jy) }{(Df^iy)^2|Df^iy|} $$

Making the change of variable $z=f^jy$, we obtain

$$
\sum_{i=1}^{\infty} \sum_{f^iy=x}(II)
=2 \sum_{i=1}^{\infty}\sum_{j=0}^{i-1} \sum_{f^iy=x}\frac{\xi(y)\rho(y) (Df^j)(y)\xi(z) }{(Df^iy)^2|Df^iy|} $$
$$=2 \sum_{i=1}^{\infty}\sum_{j=0}^{i-1}
\sum_{f^iy=x}\frac{\xi(y)\rho(y) (Df^j)(y)\xi(z) }{(Df^{i-j}z)^2(Df^jy)^2|Df^{i-j}z||Df^jy|}
=2 \sum_{i=1}^{\infty}\sum_{j=0}^{i-1} \sum_{f^iy=x}\frac{\xi(y)\rho(y) \xi(z) }{(Df^{i-j}z)^2Df^jy|Df^{i-j}z||Df^jy|} $$
$$=2 \sum_{i=1}^{\infty}\sum_{j=0}^{i-1} \sum_{f^{i-j}z=x}\frac{ \xi(z) }{(Df^{i-j}z)^2|Df^{i-j}z|} \sum_{f^{i}y=z}\frac{\xi(y)\rho(y)}{Df^jy|Df^jy|}
=2 \sum_{i=1}^{\infty}\sum_{j=0}^{i-1}\cL_2^{i-j}(\xi\cL_1^j(\xi\rho)).$$

By Lemma \ref{doubleseries}
$$\sum_{i=1}^{\infty}\sum_{j=0}^{i-1}\cL_2^{i-j}(\xi\cL_1^j(\xi\rho))=\sum_{i=1}^{\infty}\sum_{j=0}^{\infty}\cL_2^{i-j}(\xi\cL_1^j(\xi\rho)).$$

Therefore
\begin{equation}\label{SecondDerivativeII}
\sum_{i=1}^{\infty} \sum_{f^iy=x}(II)=\sum_{i=1}^{\infty}\sum_{j=0}^{\infty}\cL_2^{i-j}(\xi\cL_1^j(\xi\rho)).
\end{equation}

Combining $(\ref{SecondDerivativeI})$ and $(\ref{SecondDerivativeII})$, we finally obtain

$$\rho_1' =-\sum_{i=1}^{\infty}\cL_2^i(\xi'\rho) + \sum_{i=1}^{\infty}\sum_{j=1}^{\infty}\cL_2^i(\xi \cL_1^j(\xi \rho))+  \sum_{i=1}^{\infty}\sum_{j=0}^{\infty}\cL_2^{j}(\xi\cL_1^j(\xi\rho))$$

$$= 3 \sum_{i=1}^{\infty} \sum_{j=1}^{\infty}\cL_2^i(\xi \cL_1^j(\xi \rho))+2  \sum_{i=1}^{\infty}\cL_2^{i}(\xi^2\rho))-\sum_{i=1}^{\infty}\cL_2^i(\xi'\rho)$$

almost everywhere as claimed.
\end{proof}

\subsection{Higher order derivatives.}

Lemma \ref{firstderivative} shows that $\rho'$ is in $BV$.
Then we saw in Proposition \ref{secondderivativeforrho} that $\rho_1'=\rho_2\in BV.$ Here we show that these results can be extended to repeated differentiation of arbitrary order.  We start with the following general result.

\begin{proposition}\label{derivativeforD}
    Let $k,\;i_1,\dots,i_k$ and $m_1>\dots>m_k$ be positive integers with $i_1,\dots,i_k \geq1$.$\;$
    Let $h_1,\dots, h_k$ be BV functions whose derivatives are in $L^\infty.$

    (a) The sum $\displaystyle \sum_{k\leq i_1+\cdots+i_k\leq n} \mathfrak{D}_{m,m_2,\dots,m_k}^{i_1,\dots, i_k}(h_1,\dots,h_k)$
      belongs to $BV[0,1]$ and if $n \geq 1$, its derivative is a finite sum of functions of the type\footnote{That is, the sums coincide
at the points where both of them are defined.}
    $\displaystyle \sum_{\widetilde{k}\leq \widetilde{i}_1+ \cdots +\widetilde{i}_{\widetilde{k}}\leq n} \mathfrak{D}_{m+1,\widetilde{m}_2,\dots,\widetilde{m}_{\widetilde{k}}}^{\widetilde{i}_1\dots, \widetilde{i}_{\widetilde{k}}}(\widetilde{h}_1,\dots,\widetilde{h}_{\widetilde{k}})$,
    where $k\leq \widetilde{k}\leq k+1$ ,$\widetilde{i}_1,\dots,\widetilde{i}_{\widetilde{k}}\geq1,$ $\widetilde{m}_1>\dots> \widetilde{m}_{\widetilde{k}} $ are positive integers  and $\widetilde{h}_1,\dots,\widetilde{h}_{\widetilde{k}} \in \{h_1,\dots,h_k,h_1',\dots,h_k',\xi,\xi'\}.$

    (b) The multiseries
    $$\displaystyle \sum_{i_1=1}^{\infty}\cdots \sum_{i_k=1}^{\infty} \mathfrak{D}_{m,m_2,\dots,m_k}^{i_1,\dots, i_k}(h_1,\dots,h_k)$$
    converges in $BV$ and its derivative equals almost everywhere to a finite sum of functions of the type
    $\displaystyle \sum_{\widetilde{i}_1} \cdots \sum_{\widetilde{i}_{\widetilde{k}}}
    \mathfrak{D}_{m+1,\widetilde{m}_2,\dots,\widetilde{m}_{\widetilde{k}}}^{
      \widetilde{i}_1\dots, \widetilde{i}_{\widetilde{k}}}(\widetilde{h}_1,\dots,\widetilde{h}_{\widetilde{k}})$,
    where $k\leq \widetilde{k}\leq k+1,$ $\widetilde{i}_1,\dots,\widetilde{i}_{\widetilde{k}}\geq 1,$ $\widetilde{m}_1>\dots> \widetilde{m}_{\widetilde{k}} $ are positive integers and $$\widetilde{h}_1,\dots,\widetilde{h}_{\widetilde{k}} \in \{h_1,\dots,h_k,h_1',\dots,h_k',\xi,\xi'\}.$$

\end{proposition}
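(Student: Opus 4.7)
The plan is to prove both (a) and (b) by induction on $k$, mirroring the explicit differentiations carried out in Lemma~\ref{firstderivative} and Proposition~\ref{secondderivativeforrho}. The key engine is a formula for the derivative of $\mathfrak{D}_m^i(h)=\cL_m^i(h)$, which is then bootstrapped through the recursive definition of $\mathfrak{D}$.

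For the base case $k=1$, I would differentiate $\cL_m^i(h)(x)=\sum_{f^iy=x}\frac{h(y)}{(Df^iy)^m|Df^iy|}$ at points $x$ outside the forward orbit of $c$, using $(Df^iy)^m|Df^iy|=\mathrm{sgn}(Df^iy)(Df^iy)^{m+1}$ (sign locally constant), $dy/dx=1/Df^i(y)$, and the identity $D(Df^iy)=Df^i(y)\sum_{a=0}^{i-1}\xi(f^ay)Df^a(y)$ recorded in Section~\ref{Auxiliary}. After the change of variable $z=f^ay$---the very manipulation used in the proofs of Lemma~\ref{firstderivative} and Proposition~\ref{secondderivativeforrho}---one obtains
$$\bigl(\mathfrak{D}_m^i(h)\bigr)' \;=\; \mathfrak{D}_{m+1}^i(h') \;-\; (m+1)\sum_{a=0}^{i-1}\mathfrak{D}_{m+1,m}^{\,i-a,\,a}(\xi,h),$$
in which the $a=0$ summand collapses to $\mathfrak{D}_{m+1}^i(\xi h)$ (a one-level term, $\widetilde{k}=1$) while the $a\ge 1$ summands are two-level ($\widetilde{k}=2$). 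Summing over $i\le n$ gives (a) for $k=1$.

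For the inductive step I would write $\mathfrak{D}_{m_1,\dots,m_k}^{i_1,\dots,i_k}(h_1,\dots,h_k)=\cL_{m_1}^{i_1}(h_1\cdot G)$ with $G=\mathfrak{D}_{m_2,\dots,m_k}^{i_2,\dots,i_k}(h_2,\dots,h_k)$, apply the $k=1$ formula to the outer $\cL_{m_1}^{i_1}$, and expand $(h_1 G)'=h_1'G+h_1 G'$. Substituting the inductive expression for $G'$ (a finite sum of $\mathfrak{D}$'s with leading $m$-index equal to $m_2+1$ and $\widetilde{k}'\in\{k-1,k\}$), each piece slots into the target form: the $h_1'G$-piece produces $\widetilde{k}=k$ terms, the $h_1 G'$-piece produces $\widetilde{k}\in\{k,k+1\}$ terms, and the strictly decreasing chain $m_1+1>m_2+1>\widetilde{m}_3>\cdots$ is preserved since $m_1>m_2$. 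Collecting the finitely many resulting terms proves (a). For (b), Proposition~\ref{PrDecDh}(b) furnishes absolute convergence of the multiseries in $BV$; the same estimate applied to the derivative terms produced by (a)---whose leading $m$-index is $m+1$, so they decay at least as fast---shows convergence of the derivative multiseries in $BV$, a fortiori in $L^1$, and Lemma~\ref{limitforderivatives} then identifies the derivative of the $BV$-limit with this multiseries almost everywhere.

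The main obstacle is bookkeeping: tracking the $a=0$ summand in the base formula, in which $\cL_m^0$ acts as the identity and so drops the level count by one, and checking that each product such as $\xi h_j$ or $\xi h_j'$ that appears under differentiation is absorbed into the leading slot of a new $\mathfrak{D}$ rather than producing a function outside the permitted list $\{h_1,\dots,h_k,h_1',\dots,h_k',\xi,\xi'\}$. This dichotomy between the $a=0$ and $a\ge 1$ contributions is precisely what yields $\widetilde{k}\le k+1$ (rather than an unbounded growth of $\widetilde{k}$) after one differentiation.
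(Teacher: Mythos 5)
Your proposal is correct and follows essentially the same route as the paper: both factor $\mathfrak{D}_{m_1,\dots,m_k}^{i_1,\dots,i_k}(h_1,\dots,h_k)=\cL_{m_1}^{i_1}(h_1\cdot G)$, differentiate the outer operator to produce a numerator term (product rule, yielding $h_1'G+h_1G'$) and a denominator term (via $D[(Df^i y)^{m}|Df^i y|]=(m+1)(Df^iy)^{m-1}|Df^iy|\sum_{j<i}\xi(f^jy)Df^jy$ and the change of variables $z=f^jy$), and then apply the inductive hypothesis to $G'$; your clean base-case identity $(\mathfrak{D}_m^i(h))' = \mathfrak{D}_{m+1}^i(h') - (m+1)\sum_{a=0}^{i-1}\mathfrak{D}_{m+1,m}^{i-a,a}(\xi,h)$ encapsulates the paper's $(I)/(II)$ split. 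The only surface difference is that you induct on $k$ while the paper inducts on the leading index $m$ (these are interchangeable here since $m\ge k$), and your passage to the limit for (b) via Proposition~\ref{PrDecDh} and Lemma~\ref{limitforderivatives} is the paper's argument verbatim.
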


\begin{proof}
  The sum $\displaystyle \sum_{k\leq i_1+\cdots+i_k\leq n} \mathfrak{D}_{m,m_2,\dots,m_k}^{i_1,\dots, i_k}(h_1,\dots,h_k)$
  is of bounded variation by Proposition \ref{PrDecDh}.
  To prove the rest of part (a), we use induction on $m$. For $m=1$, we need to compute

      $\displaystyle \sum_{i=1}^n D\bigg(\cL_1^i(h)\bigg) $, so let us work on $D\bigg(\cL_1^i(h)\bigg)$.$\;$Then
\begin{equation*}D\bigg(\cL_1^i(h)\bigg)(x)=\sum_{f^i(y)=x}D\bigg(\frac{h(y)}{Df^iy|Df^iy|}  \bigg)=\sum_{f^i(y)=x}\left[\frac{h'(y)}{Df^iy|Df^iy|}- \frac{h(y)D(Df^iy|Df^iy|)}{(Df^iy)^2|Df^iy|^2}\right]
\end{equation*}
$$=
    \sum_{f^i(y)=x}\left[\frac{h'(y)Dy}{Df^iy|Df^iy|}-\frac{h(y)(D(Df^iy)|Df^iy|+Df^iyD(|Df^iy|))}{(Df^iy)^2|Df^iy|^2}\right]$$
    $$=\sum_{f^i(y)=x} \left[
    \frac{h'(y)}{(Df^iy)^2|Df^iy|}-\frac{h(y)(2 |Df^iy|\sum_{j=0}^{i-1}\xi(f^j(x))Df^j(x) )}{(Df^iy)^2|Df^iy|^2}\right]$$
    $$=\cL_2^i(h')(x)-2    \sum_{j=0}^{i-1}\sum_{f^i(y)=x} \frac{h(y) \xi(f^j(y))Df^j(y))}{(Df^iy)^2|Df^iy|}.$$

    Let $z=f^j(y)$. Then
    \begin{eqnarray*}
    D\bigg(\cL_1^i(h)\bigg)(x)&=&\cL_2^i(h')(x)-2    \sum_{j=0}^{i-1}\sum_{f^i(y)=x} \frac{h(y) \xi(f^j(y))Df^j(y))}{(Df^iy)^2|Df^iy|}\\
    &=&\cL_2^i(h')(x)-2    \sum_{j=0}^{i-1}\sum_{f^i(y)=x} \frac{h(y) \xi(z)}{(Df^{i-j}z)^2|Df^{i-j}z| (Df^jy)|Df^jy|}\\
    &=&\cL_2^i(h')(x)-2    \sum_{j=0}^{i-1}\sum_{f^{i-j}z=x}  \frac{\xi(z)}{(Df^{i-j}z)^2|Df^{i-j}z|}  \sum_{f^jy=z} \frac{h(y)}{ (Df^jy)|Df^jy|}\\
    &=&\cL_2^i(h')(x)-2\sum_{j=0}^{i-1}\cL_2^{i-j}(\xi \cL_1^j(h)).
    \end{eqnarray*}
Hence
    \begin{eqnarray*}
    \sum_{i=1}^{n}D\bigg(\cL_1^i(h)\bigg)&=& \sum_{i=1}^{n}\cL_2^i(h')(x)-2\sum_{i=1}^{n}\sum_{j=0}^{i-1}\cL_2^{i-j}(\xi \cL_1^j(h))\\
    &=& \sum_{i=1}^{n}\cL_2^i(h')(x)-2\sum_{i=1}^{n}\cL_2^{i}(\xi h)-2\sum_{i=1}^{n}\sum_{j=1}^{i-1}\cL_2^{i-j}(\xi \cL_1^j(h))\\
    &=&\sum_{1}^{n}\cL_2^i(h')(x)-2\sum_{i=1}^{n}\cL_2^{i}(\xi h)-2\mathop{\sum_{2 \leq i+j\leq n}}_{1\leq i,1\leq j}\cL_2^{i}(\xi \cL_1^j(h))\\
    \end{eqnarray*}

    Therefore, the derivative is a finite sum of terms as described in the statement.

    Assume the statement is true for $l<m$.$\;$Let us prove that it also holds for $m$.
We are interested in the derivative of

    \begin{equation}\label{inductionform}
    \displaystyle \sum_{k+1\leq i+ i_1 + \cdots + i_k \leq n} \mathfrak{D}_{m,m_1,\dots,m_k}^{i,i_2,\dots, i_k}(h,h_1,\dots,h_k)
    \end{equation}
with $i\geq 1, i_1 \geq 1, \dots,i_k \geq 1$.
    For this, note that

    $$\sum_{k+1\leq i+i_1 \cdots i_k \leq n} \mathfrak{D}_{m,m_1,\dots,m_k}^{i,i_1,\dots, i_k}(h,h_1,\dots,h_k)=\sum_{i=1}^{n-k} \cL_m^{i}(h\;\sum_{k\leq i_1 + \cdots + i_k \leq n-i}\mathfrak{D}^{i_1,\dots,i_k}_{m_1,\dots,m_k}(h_1,\dots,h_k) ) $$

    Thus, if we are interested in the derivative of $(\ref{inductionform})$, we need to analyze

    $$\sum_{i=1}^{n-k} D\bigg[ \cL_m^{i}(h\;\sum_{k\leq i_1 + \cdots + i_k \leq n-i}\mathfrak{D}^{i_1,\dots,i_k}_{m_1,\dots,m_k}(h_1,\dots,h_k)  )\bigg] $$

    $$ =\sum_{i=1}^{n-k} \sum_{f^iy=x}D\bigg[\frac{h(y)\;\sum_{k\leq i_1 + \cdots + i_k \leq n-i}\mathfrak{D}^{i_1,\dots,i_k}_{m_1,\dots,m_k}(h_1,\dots,h_k)(y)  }{(Df^iy)^m|Df^iy|}\bigg]
    =\sum_{i=1}^{n-k} \sum_{f^iy=x} (I) - (II)$$ where

    $$(I)=\frac{D\bigg[h(y)\;\displaystyle \hsum
    \mathfrak{D}^{i_1,\dots,i_k}_{m_1,\dots,m_k}(h_1,\dots,h_k)(y)\bigg]}{(Df^iy)^m|Df^iy|},$$
         $$(II)= \frac{h(y)\;\displaystyle \hsum
    \mathfrak{D}^{i_1,\dots,i_k}_{m_1,\dots,m_k}(h_1,\dots,h_k)(y)D\bigg[(Df^iy)^{2m}|Df^iy|^2\bigg] }{(Df^iy)^{2m}|Df^iy|^2}  $$
and $\hsum$ means $\sum_{k\leq i_1 + \cdots + i_k \leq n-i}.$

    Let us first work on $(II)$. Note that

    $$
    D\bigg[ (Df^iy)^{m}|Df^iy| \bigg] = m (Df^iy)^{m-1}D(Df^iy)|Df^iy|+ (Df^iy)^{m}D(|Df^iy|)$$
    \begin{eqnarray*}
    &=& m (Df^iy)^{m-1}|Df^iy| \sum_{j=0}^{i-1}\xi(f^jy)Df^jy+ (Df^iy)^{m} \frac{|Df^iy|}{(Df^iy)}\sum_{j=0}^{i-1}\xi(f^jy)Df^jy \\
    &=& m (Df^iy)^{m-1}|Df^iy| \sum_{j=0}^{i-1}\xi(f^jy)Df^jy+ (Df^iy)^{m-1}|Df^iy|\sum_{j=0}^{i-1}\xi(f^jy)Df^jy \\
    &=& (m+1) (Df^iy)^{m-1}|Df^iy| \sum_{j=0}^{i-1}\xi(f^jy)Df^jy.
    \end{eqnarray*}
    Then $\displaystyle \sum_{i=1}^{n-k} \hsum
    \sum_{f^iy=x} (II)$ equals
$$ \sum_{i=1}^{n-k} \hsum
\sum_{f^iy=x} \frac{(m+1)h(y)\;\mathfrak{D}^{i_1,\dots,i_k}_{m_1,\dots,m_k}(h_1,\dots,h_k)(y) (Df^iy)^{m-1}|Df^iy| \sum_{j=0}^{i-1}\xi(f^jy)Df^jy}{(Df^iy)^{2m}|Df^iy|^2} $$
$$    = \sum_{i=1}^{n-k}\sum_{j=0}^{i-1}\hsum
\sum_{f^iy=x} \frac{(m+1)h(y)\;\mathfrak{D}^{i_1,\dots,i_k}_{m_1,\dots,m_k}(h_1,\dots,h_k)(y) \xi(f^jy)Df^jy}{(Df^iy)^{m+1}|Df^iy|} . $$

    Let $z=f^jy$. Then $\displaystyle \sum_{i=1}^{n-k} \hsum
    \sum_{f^iy=x} (II)$ equals

    \begin{eqnarray*}
    &=& \sum_{i=1}^{n-k} \sum_{j=0}^{i-1} \hsum
    \sum_{f^iy=x} \frac{(m+1)h(y)\;\mathfrak{D}^{i_1,\dots,i_k}_{m_1,\dots,m_k}(h_1,\dots,h_k)(y) \xi(f^jy)Df^jy}{(Df^iy)^{m+1}|Df^iy|} \\
    &=& \sum_{i=1}^{n-k}\sum_{j=0}^{i-1}\hsum
    \sum_{f^iy=x} \frac{(m+1)h(y)\;\mathfrak{D}^{i_1,\dots,i_k}_{m_1,\dots,m_k}(h_1,\dots,h_k)(y) \xi(z)Df^jy}{(Df^{i-j}z)^{m+1}|Df^{i-j}z|(Df^{j}y)^{m+1}|Df^{j}y|} \\
    &=& \sum_{i=1}^{n-k}\sum_{j=0}^{i-1}\hsum
    \sum_{f^iy=x} \frac{(m+1)h(y)\;\mathfrak{D}^{i_1,\dots,i_k}_{m_1,\dots,m_k}(h_1,\dots,h_k)(y) \xi(z)}{(Df^{i-j}z)^{m+1}|Df^{i-j}z|(Df^{j}y)^{m}|Df^{j}y|} \\
      \end{eqnarray*}
       \begin{eqnarray*}
     &=& \sum_{i=1}^{n-k}\sum_{j=0}^{i-1}\hsum
     \sum_{f^{i-j}z=x}\frac{\xi(z)}{(Df^{i-j}z)^{m+1}|Df^{i-j}z|}
    \sum_{f^{j}y=z}  \frac{(m+1)h(y)\;\mathfrak{D}^{i_1,\dots,i_k}_{m_1,\dots,m_k}(h_1,\dots,h_k)(y) }{(Df^{j}y)^m|Df^{j}y|} \\
        &=&(m+1)\sum_{i=1}^{n-k}\sum_{j=0}^{i-1}\hsum
        \cL_{m+1}^{i-j}( \xi\; \cL_m^{j}(h\;\mathfrak{D}^{i_1,\dots,i_k}_{m_1,\dots,m_k}(h_1,\dots,h_k)) )(x) \\
    &=& (m+1)\sum_{i=1}^{n-k}\hsum
    \cL_{m+1}^{i}( \xi\;(h\;\mathfrak{D}^{i_1,\dots,i_k}_{m_1,\dots,m_k}(h_1,\dots,h_k) ))(x)+\\
    & &(m+1) \mathop{\sum_{1\leq i+j \leq n-k}}_{1\leq i,1\leq j} \hsum
    \cL_{m+1}^{i-j}( \xi\; \cL_m^{j}(h\;\mathfrak{D}^{i_1,\dots,i_k}_{m_1,\dots,m_k}(h_1,\dots,h_k)))(x)
   =A+B .\\
    \end{eqnarray*}
The last two terms can be rewritten as
    $$A=(m+1)\sum_{1\leq i+ i_1 + \cdots + i_k \leq n}  \cL_{m+1}^{i}( \xi\; (h\;\mathfrak{D}^{i_1,\dots,i_k}_{m_1,\dots,m_k}(h_1,\dots,h_k) ))(x),$$

$$B=(m+1) \sum_{k+2 \leq i+j+i_1 + \cdots + i_k \leq n}  \cL_{m+1}^{i}( \xi\; \cL_m^{j}(h\;\mathfrak{D}^{i_1,\dots,i_k}_{m_1,\dots,m_k}(h_1,\dots,h_k)) )(x) . $$

    Therefore $\sum_{i=1}^{n-k} \sum_{f^iy=x} (II) $ is a sum of terms described in the statement.

    Now, let us analyze $(I)$. Note that $\sum_{i=1}^{n-k} \sum_{f^iy=x} (I)$ equals to

    $$\sum_{i=1}^{n-k} \sum_{f^iy=x} \frac{h'(y)Dy\hsum
    \mathfrak{D}^{i_1,\dots,i_k}_{m_1,\dots,m_k}(h_1,\dots,h_k)(y) }{(Df^iy)^m|Df^iy|}
    - \frac{h(y)D\bigg[\hsum
    \mathfrak{D}^{i_1,\dots,i_k}_{m_1,\dots,m_k}(h_1,\dots,h_k)(y)\bigg]}{(Df^iy)^m|Df^iy|}$$
    $$= \sum_{i=1}^{n-k} \sum_{f^iy=x} \frac{h'(y)\hsum
    \mathfrak{D}^{i_1,\dots,i_k}_{m_1,\dots,m_k}(h_1,\dots,h_k)(y) }{(Df^iy)^{m+1}|Df^iy|}
    - \frac{h(y)D\bigg[\hsum
    \mathfrak{D}^{i_1,\dots,i_k}_{m_1,\dots,m_k}(h_1,\dots,h_k)(y)\bigg]}{(Df^iy)^m|Df^iy|}$$
    $$=\sum_{i=1}^{n-k} \cL_{m+1}^i\bigg(h'\;
    \hsum
    \mathfrak{D}^{i_1,\dots,i_k}_{m_1,\dots,m_k}(h_1,\dots,h_k)\bigg)(y)
    - \sum_{i=1}^{n-k} \sum_{f^iy=x} \frac{h(y)\hsum
    D\bigg[\mathfrak{D}^{i_1,\dots,i_k}_{m_1,\dots,m_k}(h_1,\dots,h_k)(y)\bigg]}{(Df^iy)^m|Df^iy|} . $$

    Using our inductive hypothesis, the derivative of $\hsum
    \mathfrak{D}^{i_1,\dots,i_k}_{m_1,\dots,m_k}(h_1,\dots,h_k)$ is the finite sum of terms of the type $\sum_{\widetilde{k}\leq \widetilde{i}_1 + \cdots + \widetilde{i}_{\widetilde{k}} \leq n-i}\mathfrak{D}^{\widetilde{i}_1,\widetilde{i}_2,\dots,\widetilde{i}_{\widetilde{k}}}_{m_1+1,\widetilde{m}_2, \dots ,\widetilde{m}_{\widetilde{k}}}(\widetilde{h}_1,\dots,\widetilde{h}_{\widetilde{k}})$.$\;\;$Hence, let us take one of these terms and analyze the expression

$$     \sum_{i=1}^{n-k} \sum_{\widetilde{k}\leq \widetilde{i}_1 + \cdots + \widetilde{i}_{\widetilde{k}} \leq n-i} \sum_{f^iy=x} \frac{h(y)\mathfrak{D}^{i_1+1,\widetilde{i}_2,\dots,\widetilde{i}_{\widetilde{k}}}_{m_1+1,\widetilde{m}_2, \dots ,\widetilde{m}_{\widetilde{k}}}(\widetilde{h}_1,\dots,\widetilde{h}_{\widetilde{k}})(y)\cdot Dy}{(Df^iy)^m|Df^iy|} $$
$$= \sum_{i=1}^{n-k} \sum_{\widetilde{k}\leq \widetilde{i}_1 + \cdots + \widetilde{i}_{\widetilde{k}} \leq n-i} \sum_{f^iy=x}\frac{h(y)\mathfrak{D}^{i_1+1,\widetilde{i}_2,\dots,\widetilde{i}_{\widetilde{k}}}_{m_1+1,\widetilde{m}_2, \dots ,\widetilde{m}_{\widetilde{k}}}(\widetilde{h}_1,\dots,\widetilde{h}_{\widetilde{k}})(y)}{(Df^iy)^{m+1}|Df^iy|} $$
    $$=\sum_{i=1}^{n-k} \sum_{\widetilde{k}\leq \widetilde{i}_1 + \cdots + \widetilde{i}_{\widetilde{k}} \leq n-i} \cL_{m+1}^{i}(h\;\mathfrak{D}^{i_1+1,\widetilde{i}_2,\dots,\widetilde{i}_{\widetilde{k}}}_{m_1+1,\widetilde{m}_2, \dots ,\widetilde{m}_{\widetilde{k}}}(\widetilde{h}_1,\dots,\widetilde{h}_{\widetilde{k}}) )$$
    $$=\sum_{\widetilde{k}+1 \leq 1+\widetilde{i}_1 + \cdots + \widetilde{i}_{\widetilde{k}} \leq n}\cL_{m+1}^{i}(h\;\mathfrak{D}^{i_1+1,\widetilde{i}_2,\dots,\widetilde{i}_{\widetilde{k}}}_{m_1+1,\widetilde{m}_2, \dots ,\widetilde{m}_{\widetilde{k}}}(\widetilde{h}_1,\dots,\widetilde{h}_{\widetilde{k}})).$$

Since we have a finite sums of terms as
above, we obtained that our proposition also holds for $m$. Therefore, part (a)  is established by induction.

(b) 
  Proposition \ref{PrDecDh}
  allows us to take the limit $n\to\infty.$
 Then the condition $\tilde{k}\leq \tilde{i}_1 + \cdots + \tilde{i}_{\tilde{k}} \leq n$
becomes
$\tilde{k}\leq \tilde{i}_1 + \cdots + \tilde{i}_{\tilde{k}} \leq \infty$ and using the condition
$\tilde{i}_1 \geq 1, \dots, \tilde{i}_{\tilde{k}} \geq 1$
the sum
$$\displaystyle \sum_{\widetilde{i}_1 \geq 1, \dots, \widetilde{i}_{\tilde{k}}\geq 1,\;
  \widetilde{k}\leq \widetilde{i}_1+ \cdots +\widetilde{i}_{\widetilde{k}}\leq n}
\mathfrak{D}_{m+1,\widetilde{m}_2,\dots,\widetilde{m}_{\widetilde{k}}}^{\widetilde{i}_1\dots, \widetilde{i}_{\widetilde{k}}}(\widetilde{h}_1,\dots,\widetilde{h}_{\widetilde{k}})$$
converges to
$$\sum_{\widetilde{i}_1=1}^{\infty}\cdots \sum_{\widetilde{i}_{\tilde{k}}=1}^{\infty}
\mathfrak{D}_{m+1,\widetilde{m}_2,\dots,\widetilde{m}_{\widetilde{k}}}^{\widetilde{i}_1\dots, \widetilde{i}_{\widetilde{k}}}(\widetilde{h}_1,\dots,\widetilde{h}_{\widetilde{k}})$$
in $L^\infty.$ Likewise the sum
$$\sum_{i_1 \geq 1, \dots, i_{k}\geq 1,\;
  k\leq i_1+ \cdots +i_k\leq n}
\mathfrak{D}_{m+1,m_2,\dots,m_k}^{i_1\dots, i_{k}}(h_1,\dots,h_{k})$$
converges to
$$\sum_{i_1=1}^{\infty}\cdots \sum_{{i_k}=1}^{\infty}
\mathfrak{D}_{m+1,m_2,\dots,m_k}^{i_1\dots, i_k}(h_1,\dots, h_k)$$
in $BV.$
Therefore part (b) follows from part (a) and Lemma \ref{limitforderivatives}.
    \end{proof}

Proposition \ref{derivativeforD}(b) allows us to derive Theorem \ref{Derivativesofrho}.

\begin{proof}[Proof of Theorem \ref{Derivativesofrho}]


   Let $\cB_p$ be the space of functions which are $C^p$ away from $c.$

    We proceed by induction.  Cases $j=1$ and $j=2$ were already handled in Lemma \ref{firstderivative} and
Proposition \ref{secondderivativeforrho} respectively.
Assume the claim holds for $j-1$ and
moreover that $\rho_{j-1}$ is of the form
\begin{equation}
  \label{Ansatz-k-1}
\rho_{j-1}= \sum_{finite}\sum_{i_1,\dots,i_s \geq 1}
\mathfrak{D}^{i_1,\dots,i_s}_{j-1,m_2,\dots,m_s}(h_1,\dots, h_{s-1},h_s),
\end{equation}
where $s\geq 1,\; j>m_2>\cdots>m_s$,
$h_s=\hat{h} \rho$
and $h_1,\dots,h_{s-1}, \hat{h}$ are in $\cB_{k-j+2}.$
Let us prove the same for $j$.

By Proposition \ref{derivativeforD}(b), $\rho_j$ has the form

$$\rho_{j}= \sum_{finite}\sum_{\widetilde{i}_1,\dots,\widetilde{i}_r \geq 1}
\mathfrak{D}^{\widetilde{i}_1,\dots,\widetilde{i}_s}_{j,\widetilde{m}_2,\dots,\widetilde{m}_r}(\widetilde{h}_1,\dots,\widetilde{h}_{r-1}, \widetilde{h}_r),$$
where $s\leq r \leq s+1$  and for each $1\leq l \leq r$,
$\widetilde{h}_l \in B=\{h_1,\dots,h_s,h_1',\dots,h_s',\xi,\xi' \}.$
Next for the terms which contain
$h_s'=(\hat{h}')\rho+\hat{h}\rho_1$ we can use
Lemma \ref{firstderivative} to express $\rho_1$ in terms of $\rho$ the same way as we did
in the proof of Proposition \ref{secondderivativeforrho}. It follows that $\rho_j$ is of the form
\eqref{Ansatz-k-1}.
Theorem \ref{Derivativesofrho} is thus proven by induction.

\end{proof}


\section{Differentiability set for the density.}\label{Differentiability}
\subsection{Saltus part.}\label{Saltus}
Any function of bounded variation $\phi$ can be decomposed as
$$\phi=\phi_r + \phi_s$$
where $\phi_r$ is a continuous function, called the regular part, and $\phi_s$
is constant except at discontinuities of $\phi.$
$\phi_s$ is called the saltus part, it is discontinuous on a countable set (see \cite{RieszSzNagy}, page 14)

In fact, in the case of $\rho$, $\rho_s$ can be explicitly written as (\cite{Bal1})

$$\rho_s = \sum_{j\geq 1} \alpha_j H_{c_j}$$ where $c_j=f^j(c)$, $\alpha_j= \displaystyle \lim_{x\uparrow c_j}\rho(x)-\lim_{x\downarrow c_j}\rho(x)$ and $H_{c_j}$ is defined as

 \begin{equation}\label{Hfunction}
H_{c_j}(x)=
\begin{cases}
1 & \text{if  $x<c_j$ } \\
\frac{1}{2} & \text{if  $x=c_j$ } \\
0 & \text{if  $x>c_j$ } \\
\end{cases}
\end{equation}

\begin{lemma}
\label{LmSaltRho}
If $c$ is not periodic then
$$ \alpha_j=\pm \rho(c)\left[\frac{1}{|Df^j_+(c)|}+\frac{1}{|Df^j_-(c)|}\right], $$ where the expression takes the sign $+$ (resp. the sign $-$) if $f$ has a maximum (resp. minimum) at $c$.

\end{lemma}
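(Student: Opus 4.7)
The plan is to exploit the invariance $\rho=\cL^j\rho$, namely
$$ \rho(x)=\sum_{f^j(y)=x}\frac{\rho(y)}{|Df^j(y)|}, $$
and analyze how the right-hand side changes as $x$ crosses $c_j$. Choose a neighborhood $U$ of $c_j$ small enough to avoid the finite set $\{c_1,\dots,c_{j-1}\}$. Over $U$ the preimages of $x$ under $f^j$ organize into finitely many monotone inverse branches, each defined on an interval whose endpoints are either $0$, $1$, or an element of $\{c_1,\dots,c_j\}$; the last option corresponds to the $f^j$-image of a critical point of $f^j$. The critical points of $f^j$ form the set $\bigcup_{l=0}^{j-1}f^{-l}(c)$ and map under $f^j$ to $\{c_1,\dots,c_j\}$. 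The only critical point of $f^j$ whose image is $c_j$ is $c$ itself, since $c_{j-l}=c_j$ with $l\geq 1$ would contradict aperiodicity of $c$.

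The inverse branches of $f^j$ meeting $U$ thus split into two classes. The \emph{non-critical} branches extend continuously across $c_j$: on each of them $y(x)$ and $|Df^j(y(x))|$ are smooth in $x$, so the only possible source of a jump is a discontinuity of $\rho$ at $y(c_j)$. But $\rho$ is discontinuous only on $\{c_1,c_2,\dots\}$, and $y(c_j)=c_k$ with $k\geq 1$ would force $c_{k+j}=c_j$, again contradicting aperiodicity. Hence non-critical branches contribute nothing to $\alpha_j$. The two \emph{critical} branches are the local inverses of $f^j$ at $c$, defined only on one side of $c_j$: since $f^j=f^{j-1}\circ f$ with $f^{j-1}$ a local diffeomorphism at $c_1$, the restriction of $f^j$ to a neighborhood of $c$ still has a one-sided extremum at $c$ with value $c_j$, and its two monotone pieces map into the same side of $c_j$.

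On the active side, both critical preimages $y_{\pm}(x)\to c$ as $x\to c_j$; since $c\notin\{c_1,c_2,\dots\}$ the density $\rho$ is continuous at $c$ with one-sided value $\rho(c)$, so the combined critical contribution tends to
$$ \rho(c)\left[\frac{1}{|Df^j_-(c)|}+\frac{1}{|Df^j_+(c)|}\right], $$
while the inactive side contributes $0$. This yields the claimed formula, with the sign determined by which side of $c_j$ is active, i.e., by whether $f^j$ has a local maximum or minimum at $c$. Using $Df^j_{\pm}(c)=Df^{j-1}(c_1)\cdot Df_{\pm}(c)$, the type of extremum of $f^j$ at $c$ is inherited from that of $f$ (with the orientation factor $Df^{j-1}(c_1)$ reflected in the $\pm$ of the formula). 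The principal technical point is the combinatorial bookkeeping of inverse branches near $c_j$: the key input is the aperiodicity of $c$, which is used twice — first to identify $c$ as the unique critical point of $f^j$ mapping to $c_j$, and second to rule out that a non-critical preimage branch passes through any other discontinuity of $\rho$. Once these two exclusions are in place, the remaining computation is a direct passage to the limit in the transfer operator formula.
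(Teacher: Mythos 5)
Your proof takes the same route as the paper's: invoke $\rho=\cL^j\rho$, observe that only the inverse branch of $f^j$ degenerating at $y=c$ can produce a jump at $x=c_j$, and pass to the one-sided limits. The paper's own proof is a two-line sketch that simply \emph{asserts} "the discontinuity comes from $y=c$" and stops at $\alpha_j=\lim_{y\uparrow c}\frac{\rho(y)}{|Df^j(y)|}-\lim_{y\downarrow c}\frac{\rho(y)}{|Df^j(y)|}$; you supply the branch bookkeeping that justifies this claim and carry out the final evaluation, so your write-up is genuinely more complete on the combinatorics.

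One imprecision in your argument is worth flagging. Twice you deduce a contradiction with "aperiodicity of $c$" from identities of the form $c_{j-l}=c_j$ or $c_{k+j}=c_j$ with the smaller index $\geq 1$. Those identities only say that some $c_m$ with $m\geq 1$ is a \emph{periodic} point of $f$; that is entirely compatible with $c$ being strictly pre-periodic and hence not periodic. For a concrete failure: if $c_1$ is a fixed point and $c\neq c_1$, then $c_{k+j}=c_j$ for all $k,j\geq 1$ yet $c$ is not periodic, and in that case a non-critical inverse branch of $f^j$ at $c_j=c_1$ does pass through the discontinuity of $\rho$ at $c_1$. What your exclusion steps actually require is that the forward orbit $\{c_j\}_{j\geq 1}$ be infinite (all $c_j$ distinct), i.e., that $c$ not be pre-periodic. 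This is in fact the hypothesis implicitly underlying the whole decomposition $\rho_s=\sum_{j\geq 1}\alpha_j H_{c_j}$ (which is ill-posed when the $c_j$ repeat), so nothing is really broken — but you should replace "contradicting aperiodicity" by "contradicting that the $c_j$ are pairwise distinct" to make the argument airtight under the operative assumption.
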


\begin{proof}
We have
$$\alpha_j=\lim_{x\uparrow c_j}\rho(x)-\lim_{x\downarrow c_j}\rho(x).$$

Using the fact that $\rho$ is a fixed point of $\mathcal{L}$ and $\displaystyle \mathcal{L}^j\rho(x)=\sum_{f^jy=x}\frac{\rho(y)}{Df^j(y)}$, we can see that $\rho$ has a discontinuity at $x=c_j$. $\;\;$In fact, among all the $y's$ in the set $\{f^{-j}c_j\}$, the discontinuity comes from $y=c$, therefore

$$\alpha_{j}=\lim_{y\uparrow c}\frac{\rho(y)}{Df^j(y)}-\lim_{y\downarrow c}\frac{\rho(y)}{Df^j(y)}. \qedhere $$
\end{proof}

\begin{proposition}
    For $k\geq 0$, the element $\rho_k$ of the sequence from Theorem \ref{Derivativesofrho}
    can be decomposed as $(\rho_k)_r+(\rho_k)_s$, where $(\rho_k)_r$ is a continuous function and $(\rho_k)_s =\sum_{m\geq 1}\alpha_{k,j}H_{c_j}$, with $H_{c_j}$ defined in $(\ref{Hfunction})$ and $\displaystyle \alpha_{k,j}=\lim_{x\uparrow c_j}\rho_k(x)-\lim_{x\downarrow c_j}\rho_k(x).$ Moreover there exists $\theta<1$ such that
$|\alpha_{k,j}| \leq K\theta^{j}$
    \end{proposition}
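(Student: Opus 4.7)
The plan is to exploit the explicit decomposition
$$\rho_k=\sum_{\mathrm{finite}}\sum_{i_1,\dots,i_s\geq 1}\mathfrak{D}^{i_1,\dots,i_s}_{k,m_2,\dots,m_s}(h_1,\dots,h_s)$$
obtained in the proof of Theorem \ref{Derivativesofrho}, where $h_s=\hat h\rho$ and the remaining $h_1,\dots,h_{s-1},\hat h$ are smooth away from $c$. Proposition \ref{PrDecDh}(b) gives convergence in $BV$, so $\rho_k\in BV[0,1]$, and the classical decomposition of $BV$ functions (see \cite{RieszSzNagy}) then splits $\rho_k$ uniquely as $(\rho_k)_r+(\rho_k)_s$ with $(\rho_k)_r$ continuous and $(\rho_k)_s$ a countable sum of jumps. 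The case $k=0$ is already covered by Lemma \ref{LmSaltRho}, so from now on we focus on $k\geq 1$.

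The first task is to locate the discontinuities. I would prove by induction on the nesting depth $s$ that every term $\mathfrak{D}^{i_1,\dots,i_s}_{m_1,\dots,m_s}(h_1,\dots,h_s)$ is continuous off the forward critical orbit $\{c_\ell:\ell\geq 1\}$. The building blocks $h_\ell,\hat h,\xi,\xi',\rho$ are smooth off $\{c\}\cup\{c_\ell:\ell\geq 1\}$, and if $g$ has jumps only on this set then the discontinuities of $\cL_m^i(g)$ arise either from pairs of backward branches colliding at $c$ (producing jumps at the postcritical points $c_1,\dots,c_i$) or from pushing a jump of $g$ at some $z\in\{c\}\cup\{c_\ell\}$ forward by $f^i$ to $f^i(z)\in\{c_\ell:\ell\geq 1\}$. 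Non-periodicity of the critical orbit ensures $c$ is not postcritical, so $\cL_m^i(g)$ is continuous at $c$, and left-multiplying by $h_{\ell-1}$ only re-introduces a possible jump at $c$; this closes the induction.

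The quantitative heart of the proof is a jump identity of the shape
$$\lim_{x\uparrow c_j}\cL_m^i(g)-\lim_{x\downarrow c_j}\cL_m^i(g)=\pm\left[\frac{1}{(Df_-^j(c))^m|Df_-^j(c)|}+\frac{1}{(Df_+^j(c))^m|Df_+^j(c)|}\right]\cL_m^{i-j}(g)(c),$$
valid for $1\leq j<i$ when $g$ is continuous at $c$ (a degenerate variant with $g(c^\pm)$ covers $j=i$), together with the propagation rule that a jump of $g$ at $c_\ell$ is pushed forward by $\cL_m^i$ to a jump at $c_{\ell+i}$ of modulus at most $\lambda^{-i(m+1)}$ times the original. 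Both follow by tracking the unique pair of backward branches of $x$ near $c_j$ that collide at $y_{i-j}=c$. Applying them to $g=h_1\cdot \mathfrak{D}^{i_2,\dots,i_s}_{m_2,\dots,m_s}(h_2,\dots,h_s)$, bounding $\cL_m^{i-j}(g)(c)$ via Proposition \ref{PrDecDh}(a), and running an inner induction on $s$ whose base invokes Lemma \ref{LmSaltRho} for the propagated jumps of $\rho$, I obtain a bound of the form $C\lambda^{-j}\lambda^{-i_1 k}\prod_{\ell\geq 2}\lambda^{-i_\ell m_\ell}$ for the contribution of each $\mathfrak{D}$-summand to the jump at $c_j$. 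Summing the resulting geometric multiseries, and then the finite outer sum, yields $|\alpha_{k,j}|\leq K\lambda^{-j}$, so the proposition holds with $\theta=\lambda^{-1}<1$.

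The main technical obstacle is the propagation bookkeeping: one must verify that contributions coming from deeper-layer discontinuities of $g$, which live at $c_{j-i_1}$ with $j>i_1$, do not accumulate more slowly than $\theta^j$. This is exactly what the induction on $s$ achieves, provided $\theta\lambda^{k+1}>1$, which holds for $k\geq 1$ with $\theta=\lambda^{-1}$; some additional care is needed for the sign conventions in the jump formulas, which depend on whether $c$ is a local maximum or minimum.
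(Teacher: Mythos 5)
Your proof is correct, but it takes a genuinely different route from the paper's. The paper sidesteps any explicit jump computation: it replaces $\rho$ by the truncation $\brrho=\cL^{j}(1)$, controls the resulting error using the spectral gap ($\|\rho-\brrho\|_{BV}=O(\theta^j)$), and then splits the multi-index sum into the block where all indices are less than $j$ (whose terms contribute no jump at the target point) and the tail block with some index $\geq j$ (which Proposition~\ref{PrDecDh} bounds by $O(j^{s}\lambda^{-j})$). You instead compute jumps bottom-up: an explicit identity for the saltus of $\cL_m^i(g)$ at $c_j$ coming from the two backward branches colliding at $c$, a propagation rule giving a contraction factor $\lambda^{-i(m+1)}$ for jumps pushed forward, and a nested induction on the depth $s$ anchored in Lemma~\ref{LmSaltRho}. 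Both approaches work (under the implicit standing assumption that $c$ is not periodic; the periodic case is finite and trivial). The trade-off is that your argument requires the propagation bookkeeping you flag, together with the observation that non-periodicity of $c$ keeps $\cL_m^{i-j}(g)$ continuous at $c$ so that the jump identity stays a single-term formula; the paper's truncation avoids all of this at the cost of invoking the mixing rate $\theta$ and picking up a harmless polynomial factor $j^s$ that must then be absorbed into a slightly larger $\theta$. In exchange, your method gives the sharper explicit rate $|\alpha_{k,j}|\leq K\lambda^{-j}$ depending only on the expansion constant, with no geometric series losing to a polynomial and no dependence on the spectral gap. Both are correct proofs; yours is more computational and slightly stronger, the paper's is shorter and more robust.
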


\begin{proof}
        The existence of decomposition follows from the fact that, due to Theorem \ref{Derivativesofrho},
         $\rho_k\in BV$-function. We need to show that all discontinuities of $\rho_k$ lie on the critical orbit
         and bound the size of discontinuity.

         Let $z$ be a discontinuity point of $\rho_k$ which is different from $c_i$ for $i=1\dots j.$
         Let $\brrho=\cL^j (1).$
        In the proof of Proposition \ref{Derivativesofrho} we saw that

        $$\rho_k=
        \sum_{finite}\sum_{i,i_2,\dots,i_s \geq 1}
        \mathfrak{D}^{i,i_2\dots,i_s}_{k,m_2,\dots,m_k}(h_1,\dots,h_{s-1}, \rho)$$
        $$=\sum_{finite}\sum_{i,i_2,\dots,i_s \geq 1}
        \mathfrak{D}^{i,i_2\dots,i_s}_{k,m_2,\dots,m_k}(h_1,\dots,h_{s-1}, \brrho)+
        \sum_{finite}\sum_{i,i_2,\dots,i_s \geq 1}
        \mathfrak{D}^{i,i_2\dots,i_s}_{k,m_2,\dots,m_k}(h_1,\dots,h_{s-1}, \rho-\brrho).$$
Denote $\Delta(h)=\lim_{x\uparrow z}h(x) - \lim_{x \downarrow z} h(x).$ Then
$$\Delta\left(\sum_{finite}\sum_{i,i_2,\dots,i_s \geq 1}
        \mathfrak{D}^{i,i_2\dots,i_s}_{k,m_2,\dots,m_k}(h_1,\dots,h_{s-1}, \rho-\brrho)\right)=O(\theta^j)$$
in view of Proposition \ref{PrDecDh}
and the fact that $\rho-\brrho=O(\theta^j).$

        Note that if $i,i_2,\dots,i_s < j$ then $\bigg(\stackrel{k}{\cL_1}\bigg)^{i}$ and         $\bigg(\stackrel{m_{r}}{\cL_1}\bigg)^{i_r}$ are continuous at $z$ for $r=2,\dots,s$, so

        $$\sum_{finite}
        \Delta\left(\sum_{i,i_2,\dots,i_k<j}  \mathfrak{D}^{i,i_2\dots,i_s}_{k,m_2,\dots,m_s}(h_1,\dots,h_{s-1}, \brrho)\right)
        =0   . $$

Applying Proposition \ref{PrDecDh}  again we see that
$$ \sum_{finite}\sum_{\max(i,i_2,\dots,i_s)>j}  \mathfrak{D}^{i,i_2\dots,i_s}_{k,m_2,\dots,m_s}(h_1,\dots,h_{s-1}, \brrho)=
O\left(\sum_{\max(i,i_2,\dots,i_s)>j} \lambda^{-(i+i_2+\dots i_s)}\right)$$ and since the expression in the right side
is $O\left(j^{s} \lambda^{-j}\right)$, we have

$$ \Delta\left(
\sum_{finite}\sum_{\max(i,i_2,\dots,i_s)>j}  \mathfrak{D}^{i,i_2\dots,i_s}_{k,m_2,\dots,m_s}(h_1,\dots,h_{s-1}, \brrho)
\right)\leq O\left(j^{s} \lambda^{-j}\right). $$

In particular if $z$ is not on the critical orbit then $\Delta\rho_k=0$ and if $z=c_j$ then $\Delta\rho_k$ is exponentially
small in $j$ as claimed.
    \end{proof}

\subsection{Absolute continuity.}\label{Absolutely}
As we mentioned before, the regular part of $\rho$ is continuous.
In fact, it is absolutely continuous.

\begin{theorem}
\label{ThAC}
The regular part of $\rho$ is absolutely continuous. That is
$$ \rho_r(x_2)-\rho_r(x_1)=\int_{x_1}^{x_2} \rho'(x) dx. $$
\end{theorem}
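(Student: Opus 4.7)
The plan is to approximate $\rho$ by the iterates $\cL^n 1$, verify the identity for these smoother proxies, and pass to the limit. For fixed $n$, the function $\cL^n 1$ has jumps only at the finitely many points $c_1,\dots,c_n$ of the critical orbit, since each application of $\cL$ introduces at most one new discontinuity (at the critical value $c_1$) and inherits existing ones at preimages of the critical point. Consequently its saltus part $(\cL^n 1)_s$ is a finite step function, and its regular part $(\cL^n 1)_r$ is continuous and piecewise $C^1$, hence absolutely continuous. Since $(\cL^n 1)_s$ has derivative zero almost everywhere, for every $x_1<x_2$ one has
$$(\cL^n 1)_r(x_2)-(\cL^n 1)_r(x_1)=\int_{x_1}^{x_2}(\cL^n 1)'(x)\,dx.$$

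The first task is to pass to the limit inside the integral. From the proof of Lemma \ref{firstderivative}, away from the critical orbit
$$(\cL^n 1)'(x)=-\sum_{i=1}^n \cL_1^i(\xi\,\cL^{n-i}1)(x),$$
and by Proposition \ref{PrDecDh}(a) the $i$th summand is dominated by a constant times $\lambda^{-i}$, uniformly in $n$ and $x$. Combined with the pointwise convergence $(\cL^n 1)'(x)\to\rho_1(x)=\rho'(x)$ for $x$ off the (measure zero) critical orbit given by Lemma \ref{firstderivative}(b), the dominated convergence theorem yields $\int_{x_1}^{x_2}(\cL^n 1)'\,dx\to\int_{x_1}^{x_2}\rho'\,dx$.

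The second task is uniform convergence $(\cL^n 1)_r \to \rho_r$. The exponential mixing recalled in Section \ref{Auxiliary} gives $\|\cL^n 1-\rho\|_{BV}\to 0$, and in particular $var(\cL^n 1-\rho)\to 0$ and $\cL^n 1\to\rho$ uniformly. Since both $\cL^n 1$ and $\rho$ have jumps only on the critical orbit, and since the sum of absolute jumps of a BV function is bounded by its total variation,
$$\|(\cL^n 1)_s-\rho_s\|_\infty\;\le\;\sum_{j\geq 1}|\alpha_j^{(n)}-\alpha_j|\;\le\; var(\cL^n 1-\rho)\;\longrightarrow\;0,$$
where $\alpha_j^{(n)}$ is the jump of $\cL^n 1$ at $c_j$ (taken to be $0$ for $j>n$). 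Subtracting then gives $(\cL^n 1)_r\to\rho_r$ uniformly, and passing to the limit in the displayed identity completes the proof.

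The main technical point I expect to need care with is the control of the saltus parts in the last step: $L^\infty$ or even pointwise convergence does not commute with the saltus/regular decomposition in general, so we critically need BV-convergence, for which jumps are visible individually to the total variation seminorm. Everything else is a routine combination of the quantitative bounds developed in Section \ref{Auxiliary} with dominated convergence.
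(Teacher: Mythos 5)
Your proof is correct and takes essentially the same route as the paper: approximate $\rho$ by $\cL^n 1$, use the jumps-plus-integral (Newton--Leibniz) decomposition of $\cL^n 1$, and pass to the limit via Lemma \ref{firstderivative}(b) and the exponential BV-convergence $\cL^n 1 \to \rho$. Your isolation of the regular part of $\cL^n 1$ before taking the limit, together with the bound $\|(\cL^n 1)_s-\rho_s\|_\infty \le var(\cL^n 1-\rho)$, is a slightly more careful packaging of the paper's step of letting $n\to\infty$ in equation \eqref{ACTrOp} (where the interchange of limit and infinite jump sum is left implicit), but the underlying argument is the same.
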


\begin{proof}
    Let $n \geq 1$ and let $x_2,x_1 \in [0,1]$. Then
\begin{equation}
\label{ACTrOp}
(\mathcal{L}^n(1))(x_2)-(\mathcal{L}^n(1))(x_1)
\end{equation}
$$ = \int_{x_1}^{x_2}(\mathcal{L}^n(1))'(x)dx+\mathop{\sum_{j\leq n}}_{c_j \in [x_1,x_2]} \triangle_j(\mathcal{L}^n(1)),$$
where
$\triangle_j(\mathcal{L}^n(1))=\lim_{x\uparrow c_j}\mathcal{L}^n(1)(x) - \lim_{x\downarrow c_j}\mathcal{L}^n(1)(x)$.

As $n\to \infty$, $(\mathcal{L}^n(1))(x) \to \rho(x)$.$\;$Hence, $\triangle_j(\mathcal{L}^n(1)) \to \triangle_j \rho$.
By Lemma \ref{firstderivative}
    $(\mathcal{L}^n(1))' \to \rho_1$ as $n \to \infty.$ Thus
letting $n \to \infty$ in \eqref{ACTrOp} we get

    \begin{eqnarray*}
    \rho(x_2)-\rho(x_1) &=& \int_{x_1}^{x_2}\rho_1(x)dx + \sum_{c_j \in [x_1,x_2]}\triangle_j \rho\\
    &=& \int_{x_1}^{x_2} \rho_1(x)dx + \rho_s(x_2)-\rho_s(x_1)\\
    \end{eqnarray*}

Therefore
$\rho_r(x_2)-\rho_r(x_1) =\int_{x_1}^{x_2} \rho_1(x)dx. $
\end{proof}

\begin{proposition}\label{IneqRho}
There exist constants $K\geq1$, $D\geq1$ and $\varsigma<1$ such that if $\bar{x}$ satisfies
\begin{equation}\label{cjawayfromxbar}
d(c_j,\bar{x})>\epsilon,
\end{equation}for $j \leq n$ and
$d(x,\bar{x})<\epsilon$, then
$$|\rho(x)-\rho(\bar{x})| \leq K\epsilon+D\varsigma^n. $$
\end{proposition}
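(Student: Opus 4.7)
The plan is to decompose $\rho = \rho_r + \rho_s$ as in Section \ref{Saltus} and to bound each part separately. The hypothesis $d(c_j,\bar x) > \epsilon$ for $j \le n$ combined with $d(x,\bar x) < \epsilon$ implies that every point of the interval $I$ with endpoints $x$ and $\bar x$ lies in the open ball of radius $\epsilon$ around $\bar x$, and hence $I$ contains no critical image $c_j$ with $1 \le j \le n$. This geometric observation is what localizes the contribution of the saltus part to the tail of the critical orbit.

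For the regular part, Theorem \ref{ThAC} yields
$$ \rho_r(x) - \rho_r(\bar x) = \int_{\bar x}^{x} \rho_1(y)\, dy. $$
Since $\rho_1 \in BV[0,1]$ by Lemma \ref{firstderivative}, its supremum norm is finite, and therefore $|\rho_r(x) - \rho_r(\bar x)| \le \|\rho_1\|_\infty\, \epsilon$. Taking $K = \|\rho_1\|_\infty$ gives the first term in the claimed bound.

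For the saltus part, the representation $\rho_s = \sum_{j \ge 1}\alpha_j H_{c_j}$ gives $H_{c_j}(x) - H_{c_j}(\bar x) \in \{-1, 0, 1\}$, nonzero only when $c_j$ lies (strictly) between $x$ and $\bar x$. Combined with the geometric observation from the first paragraph this yields
$$ |\rho_s(x) - \rho_s(\bar x)| \;\le\; \sum_{c_j \in I} |\alpha_j| \;\le\; \sum_{j > n}|\alpha_j|. $$
Lemma \ref{LmSaltRho} gives $|\alpha_j| \le 2\rho(c)\,\lambda^{-j}$ in the aperiodic case; the periodic case is handled analogously by expressing $\alpha_j$ as the jump of $\mathcal{L}^j \rho$ at $c_j$ and invoking the expanding bound $|Df^j| \ge \lambda^j$. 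Summing the geometric tail produces $|\rho_s(x) - \rho_s(\bar x)| \le D\varsigma^n$ with $\varsigma = \lambda^{-1}$ and $D = 2\rho(c)\varsigma/(1-\varsigma)$. Adding the two estimates gives the proposition.

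I do not expect a serious obstacle: both ingredients (absolute continuity of $\rho_r$ and exponential decay of the jump sizes $|\alpha_j|$) are already in place by this point in the paper. The only delicate step is to argue the decay $|\alpha_j| = O(\lambda^{-j})$ uniformly, independently of whether the critical orbit is periodic, but this follows from the two-branch sum expansion of $\alpha_j$ coming from the identity $\rho = \mathcal{L}^j \rho$ together with the distortion estimates of Section \ref{Auxiliary}.
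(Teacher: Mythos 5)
Your proposal is correct and takes essentially the same approach as the paper: decompose $\rho$ into regular and saltus parts, bound the regular part via Theorem \ref{ThAC} combined with $\rho_1\in BV$ (hence $L^\infty$), and bound the saltus part by observing that the hypothesis on $c_j$ for $j\le n$ kills those terms and then using Lemma \ref{LmSaltRho} to sum the exponentially decaying tail. Your extra remark about the periodic case is a small point of care that the paper elides, but it does not change the argument.
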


\begin{proof}
    Decompose
    \begin{equation}\label{DecompositionofRho}
    \rho(x)-\rho(\bar{x}) =  (\rho_r(x)-\rho_r(\bar{x})) +(\rho(x)_s-\rho_s(\bar{x})).
    \end{equation}

Combining Theorem \ref{ThAC} with the fact that $\rho'=\rho_1 \in BV[0,1]$, we get
    \begin{equation}\label{RegularDecomposition}
    |\rho_r(x)-\rho_r(\bar{x})| \leq K\epsilon.
    \end{equation}

Also, $(\ref{cjawayfromxbar})$ implies

    \begin{equation}\label{SaltusDecomposition}
    \rho_s(x)-\rho_s(\bar{x}) = \sum_{j \geq n} \alpha_j[H_{c_j}(x)-H_{c_j}(\bar{x})].
    \end{equation}

By Lemma \ref{LmSaltRho}
$\displaystyle |\alpha_j| \leq \frac{2\| \rho\|_{\infty}}{\lambda^j}.$
Hence, we can bound \eqref{SaltusDecomposition} as

    $$|\rho_s(x)-\rho_s(\bar{x})| \leq \sum_{j \geq n}|\alpha_j|\bigg| H_{c_j}(x)-H_{c_j}(\bar{x}) \bigg|
    \leq 2\|\rho \|_{\infty} \sum_{j \geq n} \frac{1}{\lambda^j}$$
    $$= 2\|\rho \|_{\infty} \frac{1}{\lambda^n} \sum_{j\geq 1} \frac{1}{\lambda^j}
    = 2\|\rho \|_{\infty}\bigg(\frac{\lambda}{\lambda-1} \bigg) \frac{1}{\lambda^n} $$

Taking $D=2\|\rho \|_{\infty}\bigg(\frac{\lambda}{\lambda-1} \bigg)$, $\varsigma=\frac{1}{\lambda}$, we have

    \begin{equation}\label{SaltusDecomposition2}
    |\rho_s(x)-\rho_s(\bar{x})| \leq D \varsigma^n.
    \end{equation}

Combining \eqref{DecompositionofRho}, \eqref{RegularDecomposition} and \eqref{SaltusDecomposition2}
we obtain the result.
\end{proof}

\subsection{Differentiability points.}\label{DifferentiabilityPoints}
Recall that since $f$ is mixing, then there exists a constant $\theta<1$ such that
$$ \cL^n h=\left[\int h(z) dz\right] \rho(x)+O\left(\theta^n ||h||_{BV}\right).$$

 \begin{theorem}\label{higherderivative}
If $1>\beta>\max(\theta, 1/\lambda)$ and if
$\bar{x}$ is a point such that
$d(\bar{x},c_j) \geq \beta^j$ for all $j \geq j_0$ then $\rho_k$ is differentiable at $\bar{x}$.
\end{theorem}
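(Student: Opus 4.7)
The plan is to decompose $\rho_k=(\rho_k)_r+(\rho_k)_s$ in the sense of Section \ref{Saltus} and show that each piece is differentiable at $\bar{x}$, the derivative being $\rho_{k+1}(\bar{x})$. The key quantitative inputs are the bound $|\alpha_{k,j}|\le K\theta^j$ on jump sizes, the gap $\beta>\max(\theta,1/\lambda)$, and the hypothesis that $\bar{x}$ is approached by the critical orbit no faster than $\beta^j$.

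\emph{Saltus part.} For $|h|$ smaller than $\min_{j<j_0}d(\bar{x},c_j)$, the difference $(\rho_k)_s(\bar{x}+h)-(\rho_k)_s(\bar{x})=\sum_j\alpha_{k,j}[H_{c_j}(\bar{x}+h)-H_{c_j}(\bar{x})]$ only picks up those $c_j$ with $j\ge j_0$ that lie between $\bar{x}$ and $\bar{x}+h$. The hypothesis forces $\beta^j\le|h|$, i.e.\ $j\ge\log|h|/\log\beta$, so
$$ \bigl|(\rho_k)_s(\bar{x}+h)-(\rho_k)_s(\bar{x})\bigr|\le K\sum_{j\ge\log|h|/\log\beta}\theta^j=O\bigl(|h|^{\log\theta/\log\beta}\bigr)=o(|h|), $$
because $\beta>\theta$ makes the exponent strictly greater than $1$. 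Thus $(\rho_k)_s$ is differentiable at $\bar{x}$ with derivative $0$, and the same argument, applied to $\rho_{k+1}$ in place of $\rho_k$, shows that $(\rho_{k+1})_s$ is continuous at $\bar{x}$; since the regular part of any $BV$ function is continuous, $\rho_{k+1}$ itself is continuous at $\bar{x}$.

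\emph{Regular part.} I would establish the higher-order absolute continuity identity
$$ (\rho_k)_r(x_2)-(\rho_k)_r(x_1)=\int_{x_1}^{x_2}\rho_{k+1}(t)\,dt, $$
generalising Theorem \ref{ThAC}. The scheme mirrors that proof: replace $\cL^n 1$ by the truncated partial sum $\rho_k^{(n)}=\sum_{i_1+\cdots+i_s\le n}\mathfrak{D}^{i_1,\dots,i_s}_{k,m_2,\dots,m_s}(h_1,\dots,\rho)$ (finite in the outer finite sum from the proof of Theorem \ref{Derivativesofrho}), which by Proposition \ref{derivativeforD}(a) is piecewise $C^1$ with discontinuities confined to a finite subset of $\{c_1,\dots,c_n\}$; write the one-step identity for $\rho_k^{(n)}$ obtained by integrating $(\rho_k^{(n)})'$ plus the finitely many jumps; then pass to $n\to\infty$ using the $BV$-convergence $\rho_k^{(n)}\to\rho_k$ and $(\rho_k^{(n)})'\to\rho_{k+1}$ a.e.\ from Proposition \ref{derivativeforD}(b), together with summability of jumps coming from $|\alpha_{k,j}|\le K\theta^j$. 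With this identity and the continuity of $\rho_{k+1}$ at $\bar{x}$ established above, the Lebesgue differentiation theorem gives $(\rho_k)_r'(\bar{x})=\rho_{k+1}(\bar{x})$, which combined with the saltus computation yields $\rho_k'(\bar{x})=\rho_{k+1}(\bar{x})$.

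The main obstacle I anticipate is the generalisation of Theorem \ref{ThAC}. For $k=0$ the proof used the very explicit structure of $\cL^n 1$; for $k\ge 1$ one must verify that every truncation $\rho_k^{(n)}$ has its jump set contained in an initial segment of the critical orbit and that the totality of jumps remains controlled uniformly in $n$, so that the jump contributions converge to those of $\rho_k$ and term-by-term passage to the limit in the integral identity is legitimate.
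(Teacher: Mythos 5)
Your route is genuinely different from the paper's. You decompose $\rho_k=(\rho_k)_r+(\rho_k)_s$ and treat the two pieces separately: the saltus part via the exponential jump bound $|\alpha_{k,j}|\le K\theta^j$ combined with the gap $\beta>\theta$, and the regular part via a higher-order absolute-continuity identity. The paper instead works directly with the truncated transfer-operator representation of $\rho_k$ (replacing $\rho$ by $\cL^n 1$ and cutting the summation indices at $n$), shows that this truncation is $C^1$ on the critical-orbit-free interval $[x;\bar{x}]$, and performs a first-order Taylor expansion with explicit $O(\lambda^{-n}+\theta^n)$ error control; both $\beta>\theta$ and $\beta>1/\lambda$ are needed there because both approximation rates appear. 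Your scheme is structurally cleaner and, if completed, would in fact only use $\beta>\theta$ in the saltus step (plus $\bar{x}\notin\{c_j\}_{j<j_0}$), so it could yield a nominally sharper hypothesis.

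The genuine gap, which you flag yourself, is the higher-order version of Theorem~\ref{ThAC}, and your sketch of it has a flaw that would have to be repaired. You define the truncation $\rho_k^{(n)}=\sum_{i_1+\cdots+i_s\le n}\mathfrak{D}^{i_1,\dots,i_s}_{k,m_2,\dots,m_s}(h_1,\dots,\rho)$ and assert that its discontinuities lie in a finite subset of $\{c_1,\dots,c_n\}$. That is false: the innermost argument is still $\rho$, which has jumps at \emph{every} $c_j$, and applying $\mathfrak{D}$ transports each of those jumps to $c_{j+i_1+\cdots+i_s}$, so $\rho_k^{(n)}$ has infinitely many jump points. The correct truncation (used, in effect, by the paper) must also replace $\rho$ by $\brrho=\cL^n(1)$, which is smooth away from $\{c_1,\dots,c_n\}$, and control the resulting $O(\theta^n)$ error. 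With that correction the scheme becomes: establish a uniform (off the critical orbit) convergence $(\rho_k^{(n)})'\to\rho_{k+1}$ — the analogue of Lemma~\ref{firstderivative}(b), which is not yet stated in the paper for $k\ge1$ — plus summable jump control, pass to the limit in the integral identity for each finite truncation, and finish as Theorem~\ref{ThAC} does. None of this is automatic from $BV$ convergence alone (that would not rule out a singular-continuous part), so the lemma needs a genuine proof along these lines and cannot be left as a remark.

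Two smaller points. First, continuity of $\rho_{k+1}$ at $\bar{x}$ and the fundamental theorem of calculus give $(\rho_k)_r'(\bar{x})=\rho_{k+1}(\bar{x})$; calling this the Lebesgue differentiation theorem is harmless but imprecise. Second, both your argument and the paper's tacitly assume $\bar{x}\neq c_j$ for $j<j_0$; without that, $\rho_k$ has an actual jump at $\bar{x}$ and differentiability fails, and your saltus estimate (which kicks in only for $j\ge j_0$) does not cover it. You should state that assumption explicitly if you pursue this route.
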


\begin{proof}
    Let $\epsilon>0$ and let $x$ such that $d(x,\bar{x})=\epsilon$.

    Let $n$ be the maximal number such that
\begin{equation}
\label{NonCritInt}
    c_j \notin [x;\bar{x}] \text{ for all }j\leq n.
\end{equation}
     Then
    $\epsilon \geq \beta^n,$ hence
    $\epsilon\lambda^n \geq \beta^n \lambda^n$ and
    $\frac{\epsilon}{\theta^n} \geq \frac{\beta^n}{\theta^n}. $

    By definition of $\beta$, $\beta\lambda >1$ and $\frac{\beta}{\theta}>1.$ Hence, $\beta^n \lambda^n \to \infty$ and $\frac{\beta^n}{\theta^n} \to \infty$ as $n \to \infty$.$\;$Therefore,

    $$\epsilon\lambda^{n}\to\infty$$ and $$\frac{\epsilon}{\theta^n} \to \infty.$$ as $n \to \infty$.

By Theorem \ref{Derivativesofrho}

    $$\rho_k(x)=\sum_{finite}\sum_{i_1,\dots,i_k=1}^{\infty}\mathfrak{D}^{i_1,\dots,i_k}_{m_1,\dots,m_k}(h_1,\dots,h_k,\rho).$$

Let $\brrho=\cL^n (1).$
    Since $\rho=\brrho+O(\theta^{n})$, Proposition \ref{PrDecDh} implies that
    we can write the above expression as

    $$\rho_k(x)=\sum_{finite}\mathop{\sum_{k\leq i_1,\dots,i_k<n}}_{1\leq i_1,\dots,1\leq i_k}\mathfrak{D}^{i_1,\dots,i_k}_{m_1,\dots,m_k}(h_1,\dots,h_k, \brrho)+
    O\left(\lambda^{-n}+\theta^n\right).$$

    Therefore

    \begin{equation}
    \rho_k(x)-\rho_k(\bar{x})=
     \end{equation}
$$    \mathop{\sum_{k\leq i_1,\dots,i_k<n}}_{1\leq i_1,\dots,1\leq i_k}\mathfrak{D}^{i_1,\dots,i_k}_{m_1,\dots,m_k}(h_1,\dots,h_k, \brrho)(x)-\mathfrak{D}^{i_1,\dots,i_k}_{m_1,\dots,m_k}(h_1,\dots,h_k,\brrho)(\bar{x})+
O\left(\lambda^{-n}+\theta^n\right).$$

    Note that $\mathfrak{D}^{i_1,\dots,i_k}_{m_1,\dots,m_k}(h_1,\dots,h_k, \brrho)$ is differentiable in $[x;\bar{x}]$
    since $h_1\dots h_k$ are $C^1$ away from $c$ and \eqref{NonCritInt} ensures that $f^{-n}[x, \bar{x}]$ does not contain $c.$

    Thus

    $$\mathfrak{D}^{i_1,\dots,i_k}_{m_1,\dots,m_k}(h_1,\dots,h_k, \brrho)(x)-
    \mathfrak{D}^{i_1,\dots,i_k}_{m_1,\dots,m_k}(h_1,\dots,h_k, \brrho)(\bar{x})$$
    \begin{equation} \label{ApproxRho'}
    =\int_{\bar{x}}^x \bigg( \mathfrak{D}^{i_1,\dots,i_k}_{m_1,\dots,m_k}(h_1,\dots,h_k, \brrho) \bigg)'(s)ds
    \end{equation}

    By Proposition \ref{derivativeforD}

    $$\bigg(\mathop{\sum_{k\leq i_1,\dots,i_k<n}}_{1\leq i_1,\dots,1\leq i_k} \mathfrak{D}^{i_1,\dots,i_k}_{m_1,\dots,m_k}(h_1,\dots,h_k, \brrho) \bigg)'=
    \sum_{finite}\sum_{\widetilde{i}_1,\dots,\widetilde{i}_k}\mathfrak{D}^{\widetilde{i}_1,\dots,\widetilde{i}_k}_{m_1+1,\dots,\widetilde{m}_{\widetilde{k}}}(\widetilde{h}_1,\dots,\widetilde{h}_{n,k}\Upsilon_n),$$ where $\widetilde{h}_1,\dots,\widetilde{h}_{k} \in \{h_1,h_2,\dots,h_{k},h_1',\dots,h_{k}',\xi,\xi'\}$ and $\Upsilon_n\in\{\brrho,
\brrho'\}$. Hence

    $$\eqref{ApproxRho'}
    = \int_{\bar{x}}^x \sum_{finite}\sum_{\widetilde{i}_1,\dots,\widetilde{i}_k}\mathfrak{D}^{\widetilde{i}_1,\dots,\widetilde{i}_k}_{m_1+1,\dots,\widetilde{m}_{\widetilde{k}}}(\widetilde{h}_1,\dots,\widetilde{h}_{n,k}\Upsilon_n)(s)ds$$

   Decompose the last integral as

    $$\int_{\bar{x}}^x \sum_{finite}\sum_{\widetilde{i}_1,\dots,\widetilde{i}_k}\mathfrak{D}^{\widetilde{i}_1,\dots,\widetilde{i}_k}_{m_1+1,\dots,\widetilde{m}_{\widetilde{k}}}(\widetilde{h}_1,\dots,\widetilde{h}_{k}\Upsilon_n)(s)ds
    = \sum_{finite}\sum_{\widetilde{i}_1,\dots,\widetilde{i}_k} \mathfrak{D}(\widetilde{h}_1,\dots,\widetilde{h}_{k}\Upsilon_n)(\bar{x})(x-\bar{x})+$$ $$\hspace{2cm}+\int_{\bar{x}}^x \bigg[ \mathfrak{D}^{\widetilde{i}_1,\dots,\widetilde{i}_k}_{m_1+1,\dots,\widetilde{m}_{\widetilde{k}}}(\widetilde{h}_1,\dots,\widetilde{h}_{k}\Upsilon_n)(s)
    \;-\;\mathfrak{D}^{\widetilde{i}_1,\dots,\widetilde{i}_k}_{m_1+1,\dots,\widetilde{m}_{\widetilde{k}}}(\widetilde{h}_1,\dots,\widetilde{h}_{k}\Upsilon_n)(\bar{x})\bigg]ds$$

We now invoke Proposition \ref{derivativeforD} again which together with \eqref{NonCritInt} implies
that
$\mathfrak{D}^{\widetilde{i}_1,\dots,\widetilde{i}_k}_{m_1+1,\dots,m_k}
(\widetilde{h}_1,\dots,\widetilde{h}_{k},\Upsilon_n)$
is differentiable on $[x;\bar{x}].$ Moreover, by Proposition \ref{PrDecDh} its derivative
is bounded by a constant $M$. Hence the last integrand in the above formula is $O(\epsilon)$
and so the integral is $O(\epsilon^2).$ Accordingly

$$\eqref{ApproxRho'}
    =(x-\bar{x})\sum_{finite}\sum_{\widetilde{i}_1,\dots,\widetilde{i}_k} \mathfrak{D}(\widetilde{h}_1,\dots,\widetilde{h}_{k}\Upsilon_n)(\bar{x})+O(\eps^2).$$

Hence
    $$\lim_{x\to \bar{x} } \frac{\rho_k(x)-\rho_k(\bar{x})}{x-\bar{x}}
    =\lim_{x\to \bar{x}}\sum_{finite}\sum_{1\leq i_1,\dots,i_k<n} \sum_{finite}\sum_{\widetilde{i}_1,\dots,\widetilde{i}_k} \mathfrak{D}(\widetilde{h}_1,\dots,\widetilde{h}_{k}\Upsilon_n)(\bar{x})
    +O\left(\epsilon+\frac{\lambda^{-n}+\theta^n}{\eps}\right). $$

As $x$ approaches $\bar{x}$, $n$ goes to $\infty$, hence $\Upsilon_n$ converges to $\rho$ or $\rho_1$.
Thus,

$$\lim_{x\to \bar{x} } \frac{\rho_k(x)-\rho_k(\bar{x})}{x-\bar{x}} = \sum_{finite}\sum_{i_1,\dots,i_k=1}^{\infty} \sum_{finite}\sum_{\widetilde{i}_1,\dots,\widetilde{i}_k} \mathfrak{D}(\widetilde{h}_1,\dots,\widetilde{h}_{k}, \widetilde{\rho})(\bar{x})
=\rho_{k+1}(\bar{x}). \qedhere $$
\end{proof}

In particular, we have the following result which also follows from \cite{Szewc}.

\begin{corollary}
    If $c$ is periodic of period $p$, then $\rho$ differentiable except for a finite set of points.
\end{corollary}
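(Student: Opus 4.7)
The plan is to apply Theorem \ref{higherderivative} with $k=0$ (so that $\rho_k=\rho$) and observe that when $c$ is periodic of period $p$, the critical orbit $\{c_j : j\ge 1\}$ is the finite set $\{c_1,\dots,c_p\}$. Consequently, for any point $\bar{x}\in[0,1]$ which is not one of these $p$ points, the distance
\[ \delta \;=\; \min_{1\le i\le p} d(\bar{x},c_i) \]
is a strictly positive constant depending only on $\bar{x}$.

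Next, I would fix $\beta$ with $\max(\theta,1/\lambda)<\beta<1$ as in Theorem \ref{higherderivative}. Since $\beta^j\to 0$, there exists $j_0=j_0(\bar{x})$ such that $\beta^j\le\delta$ for every $j\ge j_0$. Because the critical orbit is periodic, $c_j\in\{c_1,\dots,c_p\}$ for all $j$, hence $d(\bar{x},c_j)\ge \delta\ge \beta^j$ for all $j\ge j_0$. This is exactly the hypothesis of Theorem \ref{higherderivative}, so that theorem (applied with $k=0$) yields differentiability of $\rho$ at $\bar{x}$.

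Since the only points excluded from this argument are $c_1,\dots,c_p$, the set where $\rho$ may fail to be differentiable is contained in this finite set, which is the desired conclusion. No step here is really difficult; the only thing to verify is that Theorem \ref{higherderivative} is genuinely applicable to $k=0$ (it is, since $\rho_0=\rho$ in the notation of Theorem \ref{Derivativesofrho}), and that the constant lower bound $\delta$ eventually dominates the geometric sequence $\beta^j$, which is immediate.
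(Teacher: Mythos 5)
Your proposal is correct and takes essentially the same approach as the paper: both observe that the periodic critical orbit is the finite set $\{c_1,\dots,c_p\}$, so for any $\bar{x}$ outside this set the geometric sequence $\beta^j$ (with $\beta>\max(\theta,1/\lambda)$) eventually falls below the positive constant $\min_i d(\bar{x},c_i)$, and then invoke Theorem \ref{higherderivative}.
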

\begin{proof}
    If $\bar{x}$ does not belong to the orbit of $c$ (which is a finite set) then we can pick any
    $\beta>\max(\theta, 1/\lambda)$ and pick $j_0\geq 1$ large enough so that $d(\bar{x},\bar{c})\geq \beta^j$ for all $j \geq j_0$, where $\bar{c}=\max\{c_1,c_2,\dots,c_p \}.$
\end{proof}

\subsection{Nondifferentiability set.}\label{Nondifferentiability}

As we saw in Proposition \ref{higherderivative}, if the critical orbit does not approach a point $x$ exponentially fast, then the density function $\rho$ is differentiable at $x$.
In this subsection, we obtain a partial converse to this statement that is, if the critical point does approach exponentially fast with sufficiently high exponent then we cannot have differentiability.

\begin{definition}
For $\beta<1$, define
$$\mathcal{N}_{\beta} = \{\bar{x}\;:\; d(c_n,\bar{x})\leq \beta^n \; \mbox{for infinitely many }n'\mbox{s}   \}.$$
\end{definition}

\begin{proposition}
$\mathcal{HD}(\mathcal{N}_{\beta})=0$ where $\mathcal{HD}$ denotes the Hausdorff dimension.
\end{proposition}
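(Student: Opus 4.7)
The plan is to recognize $\mathcal{N}_\beta$ as a limsup set of small balls and apply the standard Hausdorff dimension bound for such sets (a dimensional Borel--Cantelli argument). Writing
$$ \mathcal{N}_\beta = \bigcap_{N\geq 1} \bigcup_{n\geq N} B(c_n, \beta^n), $$
I get, for each $N$, a cover of $\mathcal{N}_\beta$ by intervals whose diameters I control explicitly.

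Fix any $s>0$. For each $N$, the family $\{B(c_n,\beta^n)\}_{n\geq N}$ is a cover of $\mathcal{N}_\beta$ by sets of diameter at most $2\beta^n$. Since $\beta<1$, the diameters tend to $0$ uniformly, so this cover is admissible for the $s$-dimensional Hausdorff premeasure $\mathcal{H}^s_\delta$ once $N$ is large enough. The key estimate is then the geometric series
$$ \sum_{n\geq N} (2\beta^n)^s = 2^s \sum_{n\geq N} (\beta^s)^n = \frac{2^s \beta^{Ns}}{1-\beta^s}, $$
which is finite because $\beta^s<1$, and which tends to $0$ as $N\to\infty$. Hence $\mathcal{H}^s(\mathcal{N}_\beta)=0$, and since $s>0$ was arbitrary, $\mathcal{HD}(\mathcal{N}_\beta)=0$.

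There is essentially no obstacle here: the proof is a one-line computation once $\mathcal{N}_\beta$ is written as a limsup set. The only mild subtlety is verifying that $B(c_n,\beta^n)$ indeed makes sense as a subset of $[0,1]$ (one intersects with $[0,1]$, which only shrinks the diameter), and that the cover is valid for the $\delta$-Hausdorff premeasure for any prescribed $\delta>0$, which is automatic since $2\beta^n\to 0$. No dynamical information beyond the definition of $\mathcal{N}_\beta$ is used, which matches the fact that the statement is purely metric.
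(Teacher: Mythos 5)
Your proof is correct and takes essentially the same approach as the paper: both cover $\mathcal{N}_\beta$ by the tails $\{B(c_n,\beta^n)\}_{n\geq N}$ and bound the $s$-dimensional premeasure by the geometric series $\sum_{n\geq N}(2\beta^n)^s$. The only cosmetic difference is that you let $N\to\infty$ to conclude $\mathcal{H}^s(\mathcal{N}_\beta)=0$, whereas the paper stops once the premeasure is shown finite; either suffices to give Hausdorff dimension zero.
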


\begin{proof}

Define $U_n$ as the ball centered at $c_n$ of radius $\beta^n$.
Given $\epsilon>0$ let $n_0\geq 1$ such that $\beta^{n_0} \leq \epsilon$.
Then, $\{ U_n \}_{n\geq n_0}$ is an $\epsilon-$cover of $\mathcal{N}_{\beta}$.

Note that $|U_n|=2\beta^n$.$\;\;$Hence, for any $s\geq0$ we have that

$$
\mathcal{H}_{\epsilon}^s(\mathcal{N}_{\beta} ) \leq \sum_{n \geq n_0} |U_n|^s
\leq \sum_{n \geq n_0} |U_n|^s
=\frac{2\beta^{n_0 s}}{1-\beta^s}
< \infty. $$
Therefore $\mathcal{HD}(\mathcal{N}_{\beta})=0$.
\end{proof}

\begin{proposition}\label{cndense}
If $\{c_n\}$ is dense in some interval $I\subset[0,1]$ then $\mathcal{N}_{\beta}$ is uncountable for all $\beta<1$.
\end{proposition}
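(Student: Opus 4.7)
The plan is to build a Cantor-like tree of nested closed intervals whose end-points lie in $\mathcal{N}_\beta$, producing an injection $\{0,1\}^{\mathbb{N}}\hookrightarrow\mathcal{N}_\beta$. The main input is a strengthening of the density hypothesis: if $\{c_n\}$ is dense in $I$ then every nonempty open subinterval $J\subset I$ contains $c_n$ for \emph{infinitely many} $n$. Indeed, otherwise only finitely many $c_n$ would lie in $J$, and picking a small open ball around a point of $J$ disjoint from this finite set would contradict density.

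Using this, I would inductively construct, for every finite binary word $w$, a closed ball $I_w=\bar B(c_{n_w},\beta^{n_w})$ together with its index $n_w$, arranged so that (i) $I_{w0}$ and $I_{w1}$ are disjoint subsets of the interior of $I_w$, and (ii) $n_w\to\infty$ as the length of $w$ grows. I would start from any closed subinterval $I_\emptyset\subset I$. The inductive step is clean: $\mathrm{int}(I_w)$ is a nonempty open subinterval of $I$, hence by the strengthened density contains $c_n$ for arbitrarily large $n$; I may therefore pick two such indices $n',n''$ large enough that $\bar B(c_{n'},\beta^{n'})$ and $\bar B(c_{n''},\beta^{n''})$ are disjoint and both sit inside $\mathrm{int}(I_w)$, and I assign these to the two children of $w$.

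Finally, for each $\sigma=(\sigma_1,\sigma_2,\dots)\in\{0,1\}^{\mathbb{N}}$, the nested intersection $\bigcap_k I_{\sigma_1\dots\sigma_k}$ is a singleton $\{x_\sigma\}$ because the diameters $2\beta^{n_{\sigma_1\dots\sigma_k}}$ tend to zero. By construction $d(c_{n_{\sigma_1\dots\sigma_k}},x_\sigma)\leq\beta^{n_{\sigma_1\dots\sigma_k}}$ for every $k$ and the indices $n_{\sigma_1\dots\sigma_k}$ increase to infinity along the branch, so $x_\sigma\in\mathcal{N}_\beta$. Distinct sequences $\sigma\neq\sigma'$ first disagree at some depth at which their candidate points lie in the two disjoint children of a common parent, so $x_\sigma\neq x_{\sigma'}$. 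Hence $\sigma\mapsto x_\sigma$ is injective and $\mathcal{N}_\beta$ has cardinality at least $2^{\aleph_0}$. The only mild point of care is that at each step the two daughter balls really fit disjointly inside the parent, but this is automatic once $n',n''$ are taken sufficiently large, since $\beta<1$ forces $\beta^{n'},\beta^{n''}$ to be as small as desired.
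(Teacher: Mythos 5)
Your proposal is correct and is essentially the paper's own argument: both construct a Cantor set by nesting the balls $L_n=[c_n-\beta^n,c_n+\beta^n]$, using density of $\{c_n\}$ to place two disjoint such balls inside each ball at the previous level, and observe that the resulting Cantor set sits inside $\mathcal{N}_\beta$. You fill in two details the paper leaves implicit (that density gives infinitely many indices in any open subinterval, and that the indices along a branch necessarily increase to infinity), but the underlying construction is the same.
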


We have already mentioned in Remark \ref{RmNvsB} the closure of $\{c_n\}$ contains an interval for
a typical PEUM.

\begin{proof}
Define $L_n=[c_n-\beta^n,c_n+\beta^n]$.

Since $\{c_n\}$ is dense, there exists $c_{n_1}$ such that $L_{n_1}$ is strictly contained in $I.\;\;$Set $M_1=L_{n_1}$.

Now, again using the density of $\{c_n\}$, there exist $c_{n_{(1,1)}} \in (c_{n_1}-\beta^{n_1},c_{n_1})$ and $c_{n_{(1,2)}} \in (c_{n_1},c_{n_1}+\beta^{n_1})$ such that $L_{n_{(1,1)}}$ and  $L_{n_{(1,2)}}$ are strictly contained in $(c_{n_1}-\beta^{n_1},c_{n_1})$ and  $(c_{n_1}-\beta^{n_1},c_{n_1})$ respectively.  $\;\;$ Set $M_2= L_{n_{(1,1)}} \cup L_{n_{(1,2)}}$.$\;\;$

Continuing this procedure we inductively define $M_n$ and set $M=\displaystyle \bigcap_{n\geq 1} M_n.$
$M$ is a Cantor set which is contained in $\mathcal{N}_\beta.$
Since $M$ is uncountable, so is $\mathcal{N}_{\beta}$.
\end{proof}

\begin{lemma} \label{LmNonDDensity}
    If
\begin{equation}
\label{BetaMaxDer}
\beta(\max_{x}  |f'(x)|)<1
\end{equation}

 and $\bar{x} \in \mathcal{N}_{\beta}$ then $\rho$ is non-differentiable at $\bar{x}$
\end{lemma}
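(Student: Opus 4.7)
The plan is to argue by contradiction, exploiting the jump structure of the saltus part $\rho_s$. Suppose $\rho$ is differentiable at $\bar{x}$; the goal is to produce an upper bound on the size of the jumps $\alpha_n$ along the subsequence of $n$'s with $d(c_n,\bar{x})\le\beta^n$ that contradicts the lower bound coming from Lemma \ref{LmSaltRho}.

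First I would dispose of the case where $c$ is periodic of period $p$: then $\{c_n\}$ is a finite set and $\bar{x}\in\mathcal N_\beta$ forces $\bar{x}$ to lie on the critical orbit, at which $\rho$ has a genuine jump and is automatically not differentiable. In the remaining case $c$ is not periodic, so Lemma \ref{LmSaltRho} applies and gives $|\alpha_n|\ge \rho(c)/\Lambda^n$ with $\Lambda:=\max_x|f'(x)|$; here $\rho(c)>0$ by the standard positivity of the SRB density on its support.

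Now assume $\rho$ is differentiable at $\bar x$. Fix $\varepsilon>0$ and choose $\eta>0$ such that
$$|\rho(x)-\rho(\bar x)-\rho'(\bar x)(x-\bar x)|\le \varepsilon|x-\bar x|\quad\text{for }|x-\bar x|<\eta.$$
For any $n$ with $d(c_n,\bar x)\le \beta^n<\eta/2$ and any small $\delta>0$, apply this inequality at the two points $x=c_n-\delta$ and $x=c_n+\delta$ and subtract to obtain
$$|\rho(c_n-\delta)-\rho(c_n+\delta)+2\delta\,\rho'(\bar x)|\le 2\varepsilon(\beta^n+\delta).$$
Since $\rho$ is of bounded variation, the one-sided limits $\rho(c_n^\pm)=\lim_{\delta\downarrow 0}\rho(c_n\mp\delta)$ exist, and because $\rho_r$ is continuous their difference is exactly the saltus jump $\alpha_n$. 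Letting $\delta\downarrow 0$ therefore gives $|\alpha_n|\le 2\varepsilon\beta^n$ for all sufficiently large $n$ in the relevant subsequence.

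Combining this with the Lemma \ref{LmSaltRho} bound yields
$$2\varepsilon\;\ge\;\frac{|\alpha_n|}{\beta^n}\;\ge\;\frac{\rho(c)}{(\Lambda\beta)^n}.$$
But hypothesis \eqref{BetaMaxDer} says $\Lambda\beta<1$, so the right-hand side tends to $+\infty$ along the subsequence, contradicting the fact that the left-hand side is a fixed constant. Hence $\rho$ cannot be differentiable at $\bar x$. The main obstacle is really just the clean justification of the $\delta\downarrow 0$ limit (ensuring other jumps of $\rho_s$ near $c_n$ do not interfere, which is automatic from BV regularity) together with handling the periodic case so the formula for $\alpha_n$ is available; everything else reduces to applying the estimates already established in the paper.
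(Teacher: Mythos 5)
Your proof is correct and takes essentially the same route as the paper: argue by contradiction from the jump at $c_{n}$, using Lemma \ref{LmSaltRho} to bound $|\alpha_n|$ below by $\rho(c)/(\max|f'|)^{n}$ while differentiability at $\bar{x}$ bounds it above by $O(\beta^{n})$, and these clash because $\beta\max|f'|<1$. Your explicit handling of the periodic case (where Lemma \ref{LmSaltRho} does not directly apply) and your $\varepsilon$--$\delta$ sharpening of the upper bound are small points of extra care, not a different argument.
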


\begin{proof}
    Suppose $\rho$ is differentiable at $\bar{x}$.
    Since $\bar{x} \in \mathcal{N}_{\beta}$, there exists a sequence $n_j$
    $d(\bar{x},c_{n_j}) \leq \beta^{n_j}.$
    Without loss of generality, assume $\bar{x} < c_{n_j}.$

    Let $y_1$ and $y_2$ be two arbitrary points  such that

    $$\bar{x}<y_1<c_{n_j}<y_2<c_{n_j}+\beta^{n_j}.$$

    Since $\rho$ is assumed to be differentiable at $\bar{x}$,
    we have that
$|\rho(y_i)-\rho(\bar{x})|\leq M\beta^{n_j}$ for $i=1,2$
and hence
$$|\rho(y_1)-\rho(y_2)| \leq 2M\beta^{n_j}.$$
Accordingly
    $$\frac{\rho{c}}{(\max|f'|)^{n_j}}\leq |\alpha_{n_j}|
    =\lim_{y_1\uparrow c_{n_j}, y_2\downarrow c_{n_j}}|\rho(y_2)-\rho(y_1)|\leq 2M\beta^{n_j}$$
    where the first inequality follows from  Lemma \ref{LmSaltRho}.
For large $j$ this inequality in incompatible with \eqref{BetaMaxDer}. Hence
$\rho$ can not be differentiable at $\bar{x}$.
\end{proof}

\subsection{Whitney smoothness}\label{Whitney}
\begin{proof}[Proof of Theorem \ref{ThW}, part (C)]

The case $k=1$ follows from Theorem \ref{derivativeforD}.

Let $k \geq 2$ and pick $1>\beta > \max\{\lambda^{-\frac{n}{k}},\theta^{\frac{n}{k}} \}$.$\;$Let $\bar{x} \notin \mathcal{N}_{\beta}$, let $\epsilon>0$ be very small.

Once again, let $n$ be the maximal number such that $c_j \notin [x;\bar{x}]$ for all $j\leq n.$

Then, similar to the proof of Theorem \ref{higherderivative},
\begin{equation}\label{epsilongreaterthanlambdaandomega}
\epsilon^k>\lambda^{-n}\;\mbox{ and }\; \epsilon^k > \theta^n
\end{equation}

Since $\rho=\mathcal{L}^n(1) + O(\theta^n)$, for $0 \leq s \leq k-1$, Proposition \ref{derivativeforD} implies
\begin{equation*}
\rho_{s} = \sum_{finite} \mathop{\sum_{k\leq i_1,\dots,i_k<n}}_{i_1 \geq 1 ,\dots,i_k \geq 1} \mathcal{D}_{m_1,\dots,m_j}^{i_1,\dots,i_j}(h_{1,s},\dots,h_{j-1,s},\mathcal{L}^n(1))+O(\lambda^{-n}+\theta^n).  \end{equation*}

To simplify the notation, let
\begin{equation*}
\rho_{s,n} = \sum_{finite} \mathop{\sum_{k\leq i_1,\dots,i_k<n}}_{i_1 \geq 1 ,\dots,i_k \geq 1} \mathcal{D}_{m_1,\dots,m_j}^{i_1,\dots,i_j}(h_{1,s},\dots,h_{j-1,s},\mathcal{L}^n(1)).
\end{equation*}

By definition of $n$ and since $f \in C^{k+2}$, $\rho_{k-1,n}$ is $C^2$ in $B(\bar{x},\epsilon)=\{y\;:\;|y-\bar{x}|< \epsilon \}$.$\;$Hence, if $x \in B(\bar{x},\epsilon)$,
\begin{eqnarray*}
  \rho_{k-1}(x)-\rho_{k-1}(\bar{x})&=& \rho_{k-1,n}(x) - \rho_{k-1,n}(\bar{x})+O(\lambda^{-n}+\theta^n)\\
  &=& \int_{\bar{x}}^x \rho_{k-1,n}'(y)\;dy+O(\lambda^{-n}+\theta^n)\\
  &=& \int_{\bar{x}}^x \rho_{k-1,n}'(y)- \rho_{k-1,n}'(\bar{x})dy+ \rho_{k-1,n}(\bar{x})(x-\bar{x})+O(\lambda^{-n}+\theta^n)\\
  &=&O(\epsilon^2)+\rho_{k-1,n}'(\bar{x})(x-\bar{x})+O(\lambda^{-n}+\theta^n). \\
\end{eqnarray*}

By Proposition \ref{derivativeforD}, $\rho'_{k-1,n}(\bar{x})=\rho_k(\bar{x})+O(\lambda^{-n}+\theta^n).$ Thus
\begin{equation*}
\rho_{k-1}(x)-\rho_{k-1}(\bar{x})=\rho_k(\bar{x})(x-\bar{x})+O(\epsilon(\lambda^{-n}+\theta^n))+O(\lambda^{-n}+\theta^n)+O(\epsilon^2)  .\end{equation*}

Inequalities \eqref{epsilongreaterthanlambdaandomega} imply that
\begin{equation}\label{rhox-rhobarx}
\rho_{k-1}(x)-\rho_{k-1}(\bar{x})=\rho_k(\bar{x})(x-\bar{x})+O(\epsilon^{k+1})+O(\epsilon^k)+O(\epsilon^2)
=\rho_k(\bar{x})(x-\bar{x})+O(\epsilon^2).
\end{equation}

Now, note that if $x \in B(\bar{x},\epsilon)$, then
\begin{equation}
\label{DeltaRhok-2}
  \rho_{k-2}(x)-\rho_{k-2}(\bar{x})=  \rho_{k-2,n}(x)-\rho_{k-2,n}(\bar{x})+O(\lambda^{-n}+\theta^n)
\end{equation}
$$  = \int_{\bar{x}}^x \rho_{k-2,n}'(y)\;dy+O(\lambda^{-n}+\theta^n). $$

By Proposition \ref{derivativeforD}, $\rho_{k-2,n}'(y)=\rho_{k-1}(y)+O(\lambda^{-n}+\theta^n)$.
Combining \eqref{DeltaRhok-2} with \eqref{rhox-rhobarx} and using that
$\epsilon^{k+1}<\epsilon^k<\epsilon^3$ we get
\begin{eqnarray*}
  \rho_{k-2}(x)-\rho_{k-2}(\bar{x})&=&\int_{\bar{x}}^x\rho_{k-1}(y)\;dy+O(\epsilon(\lambda^{-n}+\theta^n) )+O(\lambda^{-n}+\theta^n)\\
  &=& \int_{\bar{x}}^x\rho_{k-1}(\bar{x})+\rho_k(\bar{x})(y-\bar{x})\;dy+O(\epsilon^3)+O(\epsilon(\lambda^{-n}+\theta^n) )+O(\lambda^{-n}+\theta^n)\\
  &=& \rho_{k-1}(\bar{x})(x-\bar{x})+\rho_k(\bar{x})\frac{(x-\bar{x})^2}{2}+O(\epsilon^3).\\
\end{eqnarray*}
Continuing this recursive argument we get
\begin{equation*}
  \rho_s(x)-\rho_s(\bar{x})=\bigg(\sum_{j=0}^{k-s-1}\rho_{k-j}(\bar{x})\frac{(x-\bar{x})^{k-s-j}}{(k-s-j)!}\bigg)+O(\epsilon^{k-s+1})
\end{equation*} for all $s=0,\dots,k-1$.
In particular, when $s=0$, we have the desired result.
\end{proof}

Parts (A) and (B) of Theorem \ref{ThW} follows from
Theorem \ref{higherderivative}, Proposition \ref{cndense}
and Lemma \ref{LmNonDDensity}. Since part (C) was just proven, the proof of Theorem \ref{ThW} is complete.

\end{document}